\documentclass[12pt]{article}

\usepackage[leqno]{amsmath}
\usepackage{amssymb}
\usepackage{amsthm}
\usepackage{nicematrix}
\usepackage{anyfontsize}
\usepackage{hyperref}

\pdfstringdefDisableCommands{%
  \def\Gamma{Gamma}%
  \def\lambda{lambda}%
  \def\delta{delta}%
}

\newtheorem{theorem}{Theorem}[section]
\newtheorem{corollary}[theorem]{Corollary}
\newtheorem{proposition}[theorem]{Proposition}
\newtheorem{lemma}[theorem]{Lemma}

\newtheorem*{theorem*}{Theorem}

\theoremstyle{remark}
\newtheorem{remark}[theorem]{Remark}
\theoremstyle{definition}

\newtheorem*{definition*}{Definition}


\numberwithin{equation}{section}

\newcommand{\C}{\mathbb{C}}
\newcommand{\Z}{\mathbb{Z}}

\newcommand{\ox}{\otimes}
\DeclareMathOperator{\Pf}{Pf}
\DeclareMathOperator{\ind}{ind}
\DeclareMathOperator{\sgn}{sgn}

\newcommand{\Neg}{\phantom{-}} 
\newcommand{\dsp}{\displaystyle}

\newcommand{\bibDate}[1]{}


\title{Pfaffian Formulation of Schur's $Q$-functions}
\author{John~Graf, Naihuan~Jing}
\date{\today} 

\begin{document}

    \maketitle
    
    \begin{abstract}
        We introduce a Pfaffian formula that extends Schur's $Q$-functions $Q_\lambda$ to be indexed by compositions $\lambda$ with negative parts. This formula makes the Pfaffian construction more consistent with other constructions, such as the Young tableau and Vertex Operator constructions. With this construction, we develop a proof technique involving decomposing $Q_\lambda$ into sums indexed by partitions with removed parts. Consequently, we are able to prove several identities of Schur's $Q$-functions using only simple algebraic methods.
    \end{abstract}

    \textit{Keywords}: Schur's $Q$-function, vertex operator, Pfaffian
    
    \tableofcontents

    \section{Introduction}

    Schur's $Q$-functions were introduced in 1911 by Schur \cite{schur:1911} to study projective characters of the symmetric and alternating groups. Independent Schur's $Q$-functions $Q_{\lambda}$ are indexed by strict partitions $\lambda$, and it is customary to extend the definition of $Q_{\lambda}$ to compositions using the anti-symmetry property $Q_{(r,s)}=-Q_{(s,r)}$, for $r>s\geq0$. Consequently, for any partition $\lambda$, the function $Q_\lambda$ can be defined as the Pfaffian of the skew-symmetric matrix $(Q_{(\lambda_i,\lambda_j)})$, which can be viewed as a spin analog of the Jacobi-Trubi formula for the Schur functions \cite{macdonald:1995}. In 1990 and 1991, Nimmo \cite{nimmo:1990} and J{\'o}zefiak and Pragacz \cite{jozefiak-pragacz:1991} introduced a Pfaffian formula for skew Schur's $Q$-functions. Other constructions of Schur's $Q$-functions involve Young tableau \cite[p.~229]{macdonald:1995}, shifted Young tableaux \cite{stembridge:1989}, vertex operators \cite{jing:1991a}, and generating functions \cite[p.~253]{macdonald:1995}. 
    
    However, the above anti-symmetry property does not hold for negative integers $r,s<0$, so the matrix is not skew-symmetric if we allow $\lambda$ to have negative parts. In fact, one must also define $Q_{(0,0)}$ to be $0$ for this matrix to be skew-symmetric, even when only considering positive parts (see \cite[Theorem~4.1]{okada:2019}). This convention is not consistent with the definition that $q_0=1$, and the combinatorial fact that $\lambda$ and $\lambda0=(\lambda_1,\ldots,\lambda_n,0)$ have the same Young diagram. So, instead we should have $Q_{(0,0)}=q_0=1$, as is the case in other constructions of Schur's $Q$-functions. One may reconcile this by simply defining the diagonal of the matrix to be all $0$s \cite{huang-chu-li:2023}.
    
    In this paper, we resolve these issues by defining $Q_\lambda$ with a slightly different matrix that is always skew-symmetric, even when $\lambda$ has negative parts. Schur's $Q$-functions $Q_{(r,s)}$ with two, possibly negative, parts have been considered before \cite{ogawa:2009}, and we extend this definition to a composition $\lambda$ of any length. As a result, we are able to algebraically prove many results regarding Schur's $Q$-functions. In particular, we show that if $\lambda$ is a composition containing negative parts $-p_1,\ldots,-p_k$, then $Q_\lambda$ is, up to a coefficient, equal to the function $Q_\mu$, where $\mu$ is the composition obtained from $\lambda$ by removing the parts $\pm p_1,\ldots,\pm p_k$.

    Furthermore, we develop a useful technique of decomposing Schur's $Q$-functions into sums involving the functions $Q_{\lambda\setminus\{\lambda_i\}}$, where $\lambda\setminus\{\lambda_i\}$ is the partition obtained by removing the $i$th part of $\lambda$. Using this technique, we are able to algebraically prove fundamental identities such as $Q_{\lambda0}=Q_\lambda$ and $Q_{\lambda/0}=Q_\lambda$. These identities are not true in the traditional Pfaffian formulation where we require $Q_{(0,0)}$ to be $0$, but they are simple identities in other constructions. Crucially, we are now able to use these important identities with Pfaffian calculations.
    
    Our main theorem is a Pfaffian formulation of a vertex operator identity, which we prove using simple algebraic methods. As a consequence, we are able to decompose $Q_\lambda$ into different sums of skew functions. We are also able to find a connection between the functions $Q_{\lambda\setminus\{\lambda_k\}}$ and the $A_n$ root system. Finally, we use our techniques to prove some additional identities of Schur's $Q$-functions.
    
    \section{Preliminaries}
    
    
    A general reference for symmetric functions and Schur's $Q$-functions is \cite{macdonald:1995}. A reference for the standard Pfaffian formulas for Schur's $Q$-functions is \cite{jozefiak-pragacz:1991}.
    
    \subsection{Compositions and Partitions}
    
    A \emph{composition} $\lambda=(\lambda_1,\ldots,\lambda_n)\in\Z^n$ is a finite sequence of integers $\lambda_i$, called its \emph{parts}. A \emph{partition} is a weakly decreasing composition with nonnegative parts. For any composition $\lambda$, its \emph{weight} $|\lambda|$ is the sum of its parts, and its \emph{length} $\ell(\lambda)$ is the number of nonzero parts. We say a composition is \emph{strict} if each nonzero part is distinct. 
    We will denote the set of partitions (resp. strict partitions) by $\mathcal P$ (resp. $\mathcal{SP}$). For any integer $p\in\Z$, we define
        \[p\lambda:=(p,\lambda_1,\ldots,\lambda_n).\]
    Additionally, we can append a $0$ as a final part,
    \[\lambda0:=(\lambda_1,\ldots,\lambda_n,0).\]
    For any $i\in\{1,2,\ldots,n\}$, we define
    \[\lambda\setminus\{\lambda_i\}:=(\lambda_1,\ldots,\lambda_{i-1},\lambda_{i+1},\ldots,\lambda_n).\]
    We may similarly remove multiple parts, $\lambda\setminus\{\lambda_i,\lambda_j\}$. Note that this notation specifies the index of the part being removed from $\lambda$. For example, if $\lambda=(5,3,3,1)$ then we would write $\lambda\setminus\{\lambda_2\}=(5,3,1)$, but not $(5,3,3,1)\setminus\{3\}$. Additionally, if we remove all of the parts, then we are left with the $0$ composition.
    
    Finally, for strict compositions $\lambda\in\Z^n$ and any integer $p\in\Z$, we define the \emph{index of $p$ in $\lambda$} to be
    \[\ind(\lambda,p):=
        \begin{cases}
            i&\text{if }p=\lambda_i\text{ for some }i,\\
            0&\text{otherwise}.
        \end{cases}
    \]
    
    \subsection{Pfaffians}
        
    Suppose $M=(m_{ij})_{2n\times2n}$ is a skew-symmetric matrix. Then the determinant of $M$ is a square, and we define the \emph{Pfaffian} $\Pf M$ of $M$ by the equation $(\Pf M)^2=\det M$. We recount a formula from \cite{cao-jing-liu:2024} to compute a Laplace-type expansion of the Pfaffian. Fix some $i\in\{1,2,\ldots,2n\}$, then
    \[\Pf M=(-1)^{i-1}\sum_{j\neq i}(-1)^j:m_{ij}:\Pf M_{ij},\]
    where
    \[:m_{ij}:~:=
        \begin{cases}
            m_{ij}&\text{if }i<j,\\
            m_{ji}&\text{if }i>j,
        \end{cases}\]
    and $M_{ij}$ is the submatrix obtained from $M$ by deleting the $i$th, $j$th rows and the $i$th, $j$th columns.
    
    \subsection{Schur's $Q$-functions}
    
    Now, we may define Schur's $Q$-functions in the variables $x_1,x_2,\ldots$. We will omit the variables and write $q_n$, $Q_\lambda$, etc., in place of $q_n(x_1,x_2,\ldots)$, $Q_\lambda(x_1,x_2,\ldots,)$, etc. The functions $q_n$ are defined by the generating series
    \[\kappa_z:=\prod_{i=1}^\infty\frac{1+zx_i}{1-zx_i}=\sum_{n\in\Z}q_nz^n.\]
    Note that we have $q_0=1$, and $q_n=0$ for $n<0$. For any integers $r,s\in\Z$, we define
    \begin{equation}\label{Q_(r,s)}
        Q_{(r,s)}:=q_rq_s+2\sum_{i=1}^s(-1)^iq_{r+i}q_{s-i},
    \end{equation}
    where the sum is empty when $s<1$. In particular, we have $Q_{(r,0)}=q_r$ for all $r\in\Z$.
    
    This formula is the standard definition of Schur's $Q$-function for a partition $(r,s)$ with two parts, except we allow $r$ and $s$ to be negative integers. So, we have the antisymmetry property $Q_{(r,s)}=-Q_{(s,r)}$ for all $r,s\in\Z$ except when $s=-r$. In particular, for $r,s>0$ we have
    \begin{equation}\label{Q_(r,-r)}
        \begin{split}
            Q_{(\pm r,-s)}&=0,\\
            Q_{(-r,s)}&=0\quad(r\neq s),\\
            Q_{(-r,r)}&=(-1)^r2,\\
            Q_{(0,0)}&=1.
        \end{split}
    \end{equation}
    Note that our convention is consistent with the vertex operator construction that $Q_{(-r,r)}=(-1)^r2$ and $Q_{(r,-r)}=0$ for $r>0$. We define \emph{Schur's $Q$-function} for any composition $\lambda\in\Z^n$ to be 
    \[Q_\lambda:=
        \begin{cases}
        \Pf M(\lambda)&\text{if }n\text{ is even},\\
        \Pf M(\lambda0)&\text{if }n\text{ is odd},
        \end{cases}\]
    where the $ij$-entry of the $n\times n$ matrix $M(\nu)$ (or $(n+1)\times(n+1)$ if $n$ is odd) is
    \[M(\nu)_{ij}:=
        \begin{cases}
            Q_{(\nu_i,\nu_j)}(A)&\text{if }i>j,\\
            0&\text{if }i=j,\\
            -Q_{(\nu_j,\nu_i)}(A)&\text{if }i< j.
        \end{cases}\]
    
    Next, we define the \emph{skew Schur's $Q$-function} for compositions $\lambda\in\Z^n$ and $\mu\in\Z^m$ to be
    \[Q_{\lambda/\mu}:=
        \begin{cases}
            \Pf M(\lambda,\mu)&\text{if }n+m\text{ is even},\\
            \Pf M(\lambda0,\mu)&\text{if }n+m\text{ is odd},
        \end{cases}\]
    where $M(\lambda,\mu)$ denotes the matrix
    \[M(\lambda,\mu):=
        \begin{pmatrix}
            M(\lambda)&N(\lambda,\mu)\\
            -N(\lambda,\mu)^t&0
        \end{pmatrix},\]
    and where $N(\lambda,\mu)$ is the $n\times m$ matrix
    \[N(\lambda,\mu):=
        \begin{pmatrix}
            q_{\lambda_1-\mu_m}(A)&\cdots&q_{\lambda_1-\mu_1}(A)\\
            \vdots&&\vdots\\
            q_{\lambda_n-\mu_m}(A)&\cdots&q_{\lambda_n-\mu_1}(A)
        \end{pmatrix}.\]

    Let $\Gamma:=\Z[q_1,q_2,\ldots]$ be the subring of symmetric functions generated by the $q_i$ over integers. Clearly $Q_{\lambda}\in \Gamma$, and we will see that $Q_{\lambda}$, $\lambda\in\mathcal{SP}$, form a spanning set, and in fact basis, of $\Gamma$.
    
    \section{Fundamental Properties of $Q_\lambda$}

    When $\Gamma$ is considered as a $\Z$-module, the functions $Q_\nu$ form a basis of $\Gamma$, for strict partitions $\nu$ \cite{macdonald:1995}. Hence, it is desirable to express $Q_\lambda$ in this basis for any composition $\lambda$. However, in this section we will show that it can be convenient to express Schur's $Q$-functions with the form $Q_{p\lambda}$, where $p\in\Z$ is any integer, and $\lambda$ is a strict partition. We will also discuss some basic properties of the Pfaffian realization of Schur's $Q$-functions that are necessary to construct more complicated identities.
    
    \subsection{Accounting for the Disparity of $\ell(\lambda)$}
    
    Due to the Pfaffian definition of $Q_\lambda$, we need to treat $\lambda$ differently depending on the parity of its length $\ell(\lambda)$. Indeed, many subsequent identities will have different formulas depending on the length of $\lambda$. The following proposition provides an explanation for these differences.
    
    \begin{proposition}\label{alternating sum identity}
        Let $\lambda\in\Z^n$ be a composition, then
        \[-\sum_{i=1}^n(-1)^iq_{\lambda_i}Q_{\lambda\setminus\{\lambda_i\}}=
            \begin{cases}
                Q_\lambda&\text{if }n\text{ is odd},\\
                0&\text{if }n\text{ is even}.
            \end{cases}\]
    \end{proposition}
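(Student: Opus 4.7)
The plan is to apply the Pfaffian Laplace expansion recalled in Section~2 to a suitable even-sized skew-symmetric matrix, treating the two parity cases separately.

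For the case $n$ odd, I would start from the definition $Q_\lambda = \Pf M(\lambda 0)$, which is the Pfaffian of an $(n+1) \times (n+1)$ matrix. Applying the Laplace expansion at the row $i = n+1$, for each $j \leq n$ the upper-triangular entry $:m_{n+1,j}:$ equals $-Q_{(0,\lambda_j)}$, which simplifies to $q_{\lambda_j}$ via the antisymmetry $Q_{(0,r)} = -Q_{(r,0)} = -q_r$. The submatrix $M(\lambda 0)_{n+1,j}$ obtained by deleting rows and columns $n+1$ and $j$ is precisely $M(\lambda\setminus\{\lambda_j\})$, of even size $n-1$, whose Pfaffian is $Q_{\lambda\setminus\{\lambda_j\}}$ by definition. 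Combined with the prefactor $(-1)^n = -1$, the expansion yields $Q_\lambda = -\sum_{j=1}^n (-1)^j q_{\lambda_j} Q_{\lambda\setminus\{\lambda_j\}}$, as desired.

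For the case $n$ even, I would instead expand $\Pf M(\lambda 0 0)$ --- the Pfaffian of the $(n+2) \times (n+2)$ matrix associated to $\lambda$ followed by two zeros --- in two different ways, once along row $n+1$ and once along row $n+2$. A parallel analysis identifies the relevant sub-Pfaffians as $Q_{\lambda\setminus\{\lambda_j\}}$ for $j \leq n$ and as $Q_\lambda$ for the ``other appended zero'' term. Because the Laplace prefactor $(-1)^{i-1}$ takes opposite values at $i = n+1$ and $i = n+2$, while in each case the $Q_\lambda$ term is produced by the single entry $-Q_{(0,0)} = -1$, the two expansions take the form $S - Q_\lambda$ and $-S - Q_\lambda$ respectively, where $S = \sum_{j=1}^n (-1)^j q_{\lambda_j} Q_{\lambda\setminus\{\lambda_j\}}$. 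Equating them forces $2S = 0$, hence $S = 0$, which is exactly the claim in the even case.

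The principal technical hurdle is the careful tracking of signs throughout the Pfaffian expansion: the $(-1)^{i-1}$ prefactor, the alternating $(-1)^j$ in the summand, and the $:m_{ij}:$ convention interact delicately and must all be reconciled to arrive at the precise form stated. A related subtlety arises when some $\lambda_j = 0$: the antisymmetry $Q_{(0,r)} = -q_r$ breaks at $r=0$ under the convention $Q_{(0,0)} = q_0 = 1$, so the reduction $-Q_{(0,\lambda_j)} = q_{\lambda_j}$ requires either separate verification in such degenerate cases or an appeal to an auxiliary identity such as $Q_{\lambda 0} = Q_\lambda$.
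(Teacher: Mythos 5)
Your odd case is exactly the paper's argument: expand $\Pf M(\lambda 0)$ along the appended row, read off the entries as $q_{\lambda_j}$, and identify the sub-Pfaffians as $Q_{\lambda\setminus\{\lambda_j\}}$. Your even case, however, is correct but takes a genuinely different route. The paper applies the odd-case expansion to each $Q_{\lambda\setminus\{\lambda_i\}}=\Pf M((\lambda\setminus\{\lambda_i\})0)$, substitutes into $S:=\sum_i(-1)^iq_{\lambda_i}Q_{\lambda\setminus\{\lambda_i\}}$, and kills the resulting double sum because it is antisymmetric in $(i,j)$. You instead expand the single Pfaffian $\Pf M(\lambda 00)$ along rows $n+1$ and $n+2$ and compare: the flip of the prefactor $(-1)^{i-1}$ negates the $S$-part while the $Q_\lambda$-part (coming from the lone $(n+1,n+2)$ entry) is unchanged, so $2S=0$ and hence $S=0$ (legitimate, since $\Gamma$ is torsion-free). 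Your version is shorter, trades the antisymmetry cancellation for the row-independence of the Laplace expansion, and has the bonus that once $S=0$ is known the same computation gives $\Pf M(\lambda 00)=Q_\lambda$, so Proposition \ref{Q_lambda0} for even $n$ falls out simultaneously --- which is exactly how the paper later deduces that proposition from this one. On your closing caveat: the sign of your $Q_\lambda$-terms (you get $-Q_\lambda$ twice; the paper's displayed matrices give $+Q_\lambda$) and the worry about $-Q_{(0,\lambda_j)}$ versus $q_{\lambda_j}$ at $\lambda_j=0$ both trace back to whether the above-diagonal entries of $M(\nu)$ are read as $Q_{(\nu_i,\nu_j)}$ or as $-Q_{(\nu_j,\nu_i)}$; the paper's displayed matrices consistently use the former, under which the relevant entries are $q_{\lambda_j}$ and $Q_{(0,0)}=1$ on the nose and your degenerate case evaporates. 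Either reading leaves the conclusion $S=0$ intact.
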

    
    \begin{proof}
        If $n$ is odd, then we have
        \begin{align*}
            Q_{\lambda}&=\Pf\begin{pNiceArray}{ccc|c}
                \Block{3-3}<\fontsize{30}{50}>{M(\lambda)} &&& Q_{(\lambda_1,0)}\\
                &&& \vdots\\
                &&& Q_{(\lambda_n,0)}\\\hline
                -Q_{(\lambda_1,0)} & \cdots & -Q_{(\lambda_n,0)} & 0
            \end{pNiceArray}\\
            &=(-1)^{n+1-1}\sum_{j=1}^{\ell(\lambda)}(-1)^jq_{\lambda_j}\Pf M(\lambda\setminus\{\lambda_j\})
        \end{align*}
        by expanding along the last row/column, and since $Q_{(\lambda_j,0)}=q_{\lambda_j}$. Then, note that $\Pf(M(\lambda\setminus\{\lambda_j\}))=Q_{\lambda\setminus\{\lambda_j\}}$.
        
        If $n$ is even, then we have that $Q_{\lambda\setminus\{\lambda_i\}}=\Pf M((\lambda\setminus\{\lambda_i\})0)$, and so $Q_{\lambda\setminus\{\lambda_i\}}$ is the Pfaffian of the matrix
        \[\begin{pNiceArray}{cccccc|c}
            \Block{6-6}<\fontsize{30}{50}>{M(\lambda\setminus\{\lambda_i\})} &&&&&& Q_{(\lambda_1,0)}\\
            &&&&&& \vdots\\
            &&&&&& Q_{(\lambda_{i-1},0)}\\
            &&&&&& Q_{(\lambda_{i+1},0)}\\
            &&&&&& \vdots\\
            &&&&&& Q_{(\lambda_n,0)}\\\hline
            -Q_{(\lambda_1,0)} & \cdots & -Q_{(\lambda_{i-1},0)} & -Q_{(\lambda_{i+1},0)} & \cdots & -Q_{(\lambda_n,0)} & 0
        \end{pNiceArray}.\]
        Expanding on the last row/column, we see that we have 
        \begin{align*}
            Q_{\lambda\setminus\{\lambda_i\}}&=(-1)^{n-1}\left(\sum_{j=1}^{i-1}(-1)^jq_{\lambda_j}Q_{\lambda\setminus\{\lambda_i,\lambda_j\}}-\sum_{j=i+1}^n(-1)^jq_{\lambda_j}Q_{\lambda\setminus\{\lambda_i,\lambda_j\}}\right)\\
            &=\sum_{\substack{1\leq j\leq n\\j\neq i}}\sgn(i,j)(-1)^jq_{\lambda_j}Q_{\lambda\setminus\{\lambda_i,\lambda_j\}}
        \end{align*}
        where $\sgn(i,j)=
            \begin{cases}
                -1  & \text{if }j<i,\\
                1 & \text{if }j>i.
            \end{cases}$
            
        Finally, we substitute this formula for $Q_{\lambda\setminus\{\lambda_i\}}$ into the sum to get
        \begin{align*}
            \sum_{i=1}^n(-1)^iq_{\lambda_i}Q_{\lambda\setminus\{\lambda_i\}}&=\sum_{i=1}^n(-1)^iq_{\lambda_i}\sum_{\substack{1\leq j\leq n\\j\neq i}}\sgn(i,j)(-1)^jq_{\lambda_j}Q_{\lambda\setminus\{\lambda_i,\lambda_j\}}\\
            &=\sum_{i=1}^n\sum_{\substack{1\leq j\leq n\\j\neq i}}(-1)^{i+j}\sgn(i,j)q_{\lambda_i}q_{\lambda_j}Q_{\lambda\setminus\{\lambda_i,\lambda_j\}}\\
            &=\sum_{1\leq i<j\leq n}(-1)^{i+j}(q_{\lambda_i}q_{\lambda_j}-q_{\lambda_j}q_{\lambda_i})Q_{\lambda\setminus\{\lambda_i,\lambda_j\}}\\
            &=0.
        \end{align*}
    \end{proof}

    As a consequence of this identity, we are able to prove the following fundamental property of Schur's $Q$-functions.
    
    \begin{proposition}\label{Q_lambda0}
        Let $\lambda\in\Z^n$ be a composition, then $Q_{\lambda0}=Q_\lambda$.
    \end{proposition}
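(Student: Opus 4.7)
The plan is to case-split on the parity of $n = \ell(\lambda)$. When $n$ is odd, both sides equal $\Pf M(\lambda 0)$ directly from the definition (since $\lambda 0$ then has even length $n+1$), so the identity is immediate with no work required.

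When $n$ is even, $Q_\lambda = \Pf M(\lambda)$ is the Pfaffian of an $n\times n$ matrix, while $Q_{\lambda 0} = \Pf M(\lambda 0 0)$ is the Pfaffian of an $(n+2)\times(n+2)$ matrix, and the task is to reduce the latter to the former. My approach is to apply the Laplace-type expansion from Section~2.2 to $\Pf M(\lambda 0 0)$ along the last row (index $n+2$), producing $n+1$ terms indexed by $j \in \{1, \ldots, n+1\}$.

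These terms split naturally into two groups. The single $j = n+1$ term pairs the two appended zeros: its matrix entry involves $Q_{(0,0)} = 1$, and deleting the last two rows and columns leaves precisely $M(\lambda)$, so after the sign count this term should contribute exactly $Q_\lambda$. For each $j \in \{1, \ldots, n\}$, the corresponding matrix entry reduces via $Q_{(\lambda_j, 0)} = q_{\lambda_j}$ to $\pm q_{\lambda_j}$, while the associated minor is $\Pf M((\lambda \setminus \{\lambda_j\}) 0) = Q_{\lambda \setminus \{\lambda_j\}}$, since $\lambda \setminus \{\lambda_j\}$ has odd length $n-1$. A careful sign count should show that the total contribution of these $n$ terms is, up to an overall sign, the alternating sum $\sum_{j=1}^n (-1)^j q_{\lambda_j} Q_{\lambda \setminus \{\lambda_j\}}$, which vanishes by Proposition~\ref{alternating sum identity} applied in the even-length case. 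Combining the two groups then yields $\Pf M(\lambda 0 0) = Q_\lambda$, as desired.

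The main obstacle will be the sign bookkeeping in the Laplace expansion. The expansion formula carries the factor $(-1)^{i-1}(-1)^j$, and the entry $:m_{n+2, j}:$ is the upper-triangular value $-Q_{(\nu_{n+2}, \nu_j)}$, so one must verify that the various minus signs conspire so that the $j = n+1$ contribution produces $+Q_\lambda$ (rather than $-Q_\lambda$) and the $j \leq n$ contributions match the exact alternating sum of Proposition~\ref{alternating sum identity} rather than its negation.
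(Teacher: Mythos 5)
Your proposal is correct and follows essentially the same route as the paper: handle the odd case by definition, and in the even case expand $\Pf M(\lambda00)$ along the last row/column, isolating the $Q_{(0,0)}=1$ term as $Q_\lambda$ and killing the remaining alternating sum $\sum_{j=1}^n(-1)^jq_{\lambda_j}Q_{\lambda\setminus\{\lambda_j\}}$ via Proposition~\ref{alternating sum identity}. The sign bookkeeping you flag does work out exactly as you anticipate.
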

    
    \begin{proof}
        If $n$ is odd, then this statement is simply the definition of $Q_\lambda$. If $n$ is even, then by definition we have $Q_{\lambda0}=\Pf M(\lambda00)$, and so
        \begin{align*}
            Q_{\lambda0}&=
                \Pf\begin{pNiceArray}{ccc|cc}
                    \Block{3-3}<\fontsize{30}{50}>{M(\lambda)} &&& Q_{(\lambda_1,0)} & Q_{(\lambda_1,0)}\\
                    &&& \vdots & \vdots\\
                    &&& Q_{(\lambda_n,0)} & Q_{(\lambda_n,0)}\\\hline
                    -Q_{(\lambda_1,0)} & \cdots & -Q_{(\lambda_n,0)} & 0 & Q_{(0,0)}\\
                    -Q_{(\lambda_1,0)} & \cdots & -Q_{(\lambda_n,0)} & -Q_{(0,0)} & 0
                \end{pNiceArray}\\
                &=
                \Pf\begin{pNiceArray}{ccc|cc}
                    \Block{3-3}<\fontsize{30}{50}>{M(\lambda)} &&& q_{\lambda_1} & q_{\lambda_1}\\
                    &&& \vdots & \vdots\\
                    &&& q_{\lambda_n} & q_{\lambda_n}\\\hline
                    -q_{\lambda_1} & \cdots & -q_{\lambda_n} & 0 & 1\\
                    -q_{\lambda_1} & \cdots & -q_{\lambda_n} & -1 & 0
                \end{pNiceArray}.
        \end{align*}
        By expanding along the last row/column, we have
        \begin{align*}
            Q_{\lambda0}&=(-1)^{n+2-1}\sum_{i=1}^n(-1)^iq_{\lambda_i}\Pf(M(\lambda\setminus\{\lambda_i\}0))\\
            &\quad+(-1)^{n+2-1}(-1)^{n+1}1\Pf(M(\lambda))\\
            &=-\sum_{i=1}^n(-1)^iq_{\lambda_i}Q_{\lambda\setminus\{\lambda_i\}}+Q_\lambda
        \end{align*}
       By Proposition \ref{alternating sum identity} the sum is $0$ when $n$ is even, and hence we are left with $Q_{\lambda0}=Q_\lambda$.
    \end{proof}
    
    In other words, we may append an arbitrarily-long, finite sequence of $0$s to $\lambda$ without changing $Q_\lambda$. This is consistent with other constructions, whereas the traditional Pfaffian definition of Schur's $Q$-functions implies that doing so gives $0$. In particular, the typical Pfaffian construction defines $Q_{(0,0)}$ to be $0$, which is not consistent with the convention that $q_0=1$, whereas here we use (\ref{Q_(r,s)}) to define $Q_{(0,0)}=1$ more naturally. We remark that Proposition \ref{Q_lambda0} implicitly implies that $q_0=1$. 

    However, it is worth noting that our Pfaffian definitions are otherwise equivalent to the typical Pfaffian definitions of $Q_\lambda$ and $Q_{\lambda/\mu}$. Note that $q_{\lambda_i-0}=Q_{(\lambda_i,0)}$ and that $q_{0-\mu_i}=0$ (when $\mu_i>0$), so we also have $Q_{\lambda/\mu}=\Pf M(\lambda,\mu0)$ when $n+m$ is odd (also see \cite{okada:2019} and \cite[Theorem 1.6]{hamel:1996}). Therefore, if we only consider strict partitions $\lambda$ and $\mu$ and we set $Q_{(0,0)}$ to be $0$, then we get the same formulas for $Q_\lambda$ and $Q_{\lambda/\mu}$ as in \cite{jozefiak-pragacz:1991}.
    
    As a result of Proposition \ref{Q_lambda0}, for compositions $\lambda\in\Z^n$ we may sometimes assume $n$ is even or odd, as convenient (in general, the length $\ell(\lambda)$ may have different parity since $\ell(\lambda)\leq n$). We identify compositions that differ by any finite number of trailing $0$s for the purpose of computing $Q_\lambda$. It is clear that we may also append arbitrarily many zeros to $\lambda$ when computing $Q_{\lambda/\mu}$, although we may not always append zeros to $\mu$. 
    
    Next, we have another fundamental identity that is not true in the traditional Pfaffian construction.
    
    \begin{corollary}
        Let $\lambda\in\Z^n$ be a composition, then $Q_{\lambda/0}=Q_\lambda$.
    \end{corollary}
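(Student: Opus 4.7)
The approach is to reduce to Proposition \ref{Q_lambda0} by directly comparing the Pfaffian matrix that defines $Q_{\lambda/0}$ against a matrix of the form $M(\lambda\underbrace{0\cdots0}_k)$, and then invoking the identity $Q_{\lambda 0}=Q_\lambda$.

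I would split into two cases based on the parity of $n$. If $n$ is odd, then $n+m=n+1$ is even, so by definition $Q_{\lambda/0}=\Pf M(\lambda,(0))$. The skew block $N(\lambda,(0))$ is an $n\times 1$ column whose $i$th entry is $q_{\lambda_i-0}=q_{\lambda_i}$. Since $q_{\lambda_i}=Q_{(\lambda_i,0)}=-Q_{(0,\lambda_i)}$, the matrix $M(\lambda,(0))$ matches the matrix $M(\lambda 0)$ (whose last row and column are built from $\pm Q_{(0,\lambda_i)}$) entry-by-entry. Hence $Q_{\lambda/0}=\Pf M(\lambda 0)=Q_\lambda$ directly from the definition of $Q_\lambda$ for odd-length compositions; no further reduction is needed.

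For $n$ even, one has $Q_{\lambda/0}=\Pf M(\lambda 0,(0))$. Here $N(\lambda 0,(0))$ is the $(n+1)\times 1$ column with entries $q_{\lambda_1},\ldots,q_{\lambda_n},q_0$, where $q_0=1$. I would then compare this matrix with the $(n+2)\times(n+2)$ matrix $M(\lambda 00)$ displayed in the proof of Proposition \ref{Q_lambda0}: the two coincide entry-by-entry, using $q_0=Q_{(0,0)}=1$ at the critical corner positions and the identification $q_{\lambda_i}=-Q_{(0,\lambda_i)}$ elsewhere in the augmented row/column. Therefore $Q_{\lambda/0}=\Pf M(\lambda 00)=Q_{\lambda 00}$, and applying Proposition \ref{Q_lambda0} twice gives $Q_{\lambda 00}=Q_{\lambda 0}=Q_\lambda$.

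The only delicate step is the entry-by-entry identification of the two matrices in the even case, which hinges on the antisymmetry $Q_{(0,\lambda_i)}=-q_{\lambda_i}$ (when $\lambda_i\neq 0$) together with the compatibility $q_0=Q_{(0,0)}=1$. This is precisely the feature of the present Pfaffian convention that fails in the traditional setup where $Q_{(0,0)}=0$, and it is exactly why the corollary does not hold in that older framework.
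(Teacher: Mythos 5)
Your proof is correct and takes essentially the same route as the paper's: both identify $M(\lambda,(0))$ with $M(\lambda 0)$ when $n$ is odd (resp.\ $M(\lambda 0,(0))$ with $M(\lambda 00)$ when $n$ is even) entry-by-entry and then invoke Proposition \ref{Q_lambda0}. The only cosmetic difference is that you route the comparison through the antisymmetry $-Q_{(0,\lambda_i)}=q_{\lambda_i}$, whereas the appended row/column of $M(\lambda 0)$ already consists of the entries $Q_{(\lambda_i,0)}=q_{\lambda_i}=q_{\lambda_i-0}$, which hold for every $\lambda_i\in\Z$ (including $\lambda_i=0$), so neither the antisymmetry nor the $\lambda_i\neq 0$ caveat is actually needed.
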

    
    \begin{proof}
        This is immediate since we see that $M(\lambda,0)=M(\lambda0)$ if $n$ is odd, and $M(\lambda0,0)=M(\lambda00)$ if $n$ is even.
    \end{proof}
    
    \subsection{Reordering the Parts of $\lambda$}

    For any composition $\lambda\in\Z^n$ and integer $i\in\{1,2,\ldots,n-1\}$, let $B_i$ act on $\lambda$ by swapping parts $\lambda_i$ and $\lambda_{i+1}$,
    \[B_i\lambda:=(\lambda_1,\ldots,\lambda_{i-1},\lambda_{i+1},\lambda_i,\lambda_{i+2},\ldots,\lambda_n).\]
        
    \begin{proposition}\label{B_iQ_lambda}
        Let $\lambda\in\Z^n$ be a composition, then
        \[Q_{B_i\lambda}=
        \begin{cases}
            Q_\lambda&\text{if }\lambda_i=\lambda_{i+1},\\
            -Q_\lambda&\text{if }\lambda_i+\lambda_{i+1}\neq0,\\
            (-1)^{\lambda_i}2Q_{\lambda\setminus\{\lambda_i,\lambda_{i+1}\}}&\text{if }\lambda_i+\lambda_{i+1}=0\text{ and }\lambda_i>0,\\
            0&\text{if }\lambda_i+\lambda_{i+1}=0\text{ and }\lambda_i<0.
        \end{cases}\]
    \end{proposition}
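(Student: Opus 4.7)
The plan is to first reduce to the case where $n$ is even via Proposition \ref{Q_lambda0}, padding a trailing $0$ if needed, so that $Q_\lambda=\Pf M(\lambda)$ and $Q_{B_i\lambda}=\Pf M(B_i\lambda)$ are both Pfaffians of $n\times n$ skew-symmetric matrices. Then I split into the four cases. The unifying idea is: whenever the two-part antisymmetry $Q_{(r,s)}=-Q_{(s,r)}$ is available at the swap index, $M(B_i\lambda)$ is exactly the transposition-conjugate of $M(\lambda)$, and the Pfaffian identity $\Pf(PMP^T)=\det(P)\Pf(M)$ immediately delivers $-Q_\lambda$. When antisymmetry fails---which is precisely the condition $\lambda_{i+1}=-\lambda_i$---I instead compute the Pfaffian directly with the Laplace expansion of Section~2.2, exploiting the vanishing identities in (\ref{Q_(r,-r)}).

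Case~1 is immediate since $B_i\lambda=\lambda$. For Case~2, the hypothesis $\lambda_i+\lambda_{i+1}\neq 0$ guarantees the two-part antisymmetry $Q_{(\lambda_i,\lambda_{i+1})}=-Q_{(\lambda_{i+1},\lambda_i)}$, so I verify entry-by-entry that $M(B_i\lambda)=P\,M(\lambda)\,P^{T}$ where $P$ is the permutation matrix swapping rows and columns $i$ and $i+1$. The only places where the identity is not automatic are positions $(i,i+1)$ and $(i+1,i)$, and there it reduces exactly to the antisymmetry that holds by hypothesis. Applying $\Pf(PMP^{T})=\det(P)\Pf(M)=-\Pf(M)$ then yields $Q_{B_i\lambda}=-Q_\lambda$.

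For Cases~3 and~4 (so $\lambda_{i+1}=-\lambda_i$), the conjugation argument breaks down at the block $\{i,i+1\}$, so I instead expand $\Pf M(B_i\lambda)$ directly by the Laplace formula of Section~2.2 along the row whose entry of $B_i\lambda$ is the negative one. The key input is (\ref{Q_(r,-r)}): the identities $Q_{(\pm r,-s)}=0$ and $Q_{(-r,s)}=0$ for $r\neq s>0$ force almost every summand in the expansion to vanish. In Case~3 ($\lambda_i>0$), the $-\lambda_i$ ends up at position $i$ in $B_i\lambda$; the only surviving entry in row $i$ is at column $i+1$ and equals $Q_{(-\lambda_i,\lambda_i)}=(-1)^{\lambda_i}2$. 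The complementary Pfaffian minor is exactly $M(\lambda\setminus\{\lambda_i,\lambda_{i+1}\})$, whose Pfaffian is $Q_{\lambda\setminus\{\lambda_i,\lambda_{i+1}\}}$, and the expansion sign $(-1)^{i-1}(-1)^{i+1}=+1$ delivers the stated formula. In Case~4 ($\lambda_i<0$), the $-\lambda_i$ sits instead at position $i+1$, and the analogous expansion along row $i+1$ meets only zero entries---the would-be surviving slot now reads $Q_{(r,-r)}=0$ rather than $Q_{(-r,r)}$, with $r=-\lambda_i>0$---so $\Pf M(B_i\lambda)=0$.

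The main obstacle will be the bookkeeping in Cases~3 and~4: correctly picking the row to expand along (the one indexed by the negative part after the swap), invoking (\ref{Q_(r,-r)}) systematically to rule out every candidate nonzero entry except (in Case~3) a single survivor, and matching the signs $(-1)^{i-1}$ and $(-1)^{i+1}$ from the Pfaffian expansion against the target factor $(-1)^{\lambda_i}$ in the statement.
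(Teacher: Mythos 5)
Your proposal is correct and follows essentially the same route as the paper: reduce to even length via Proposition \ref{Q_lambda0}, handle the swap in Case~2 as a simultaneous row/column interchange of the skew-symmetric matrix (the paper states the sign change directly where you invoke $\Pf(PMP^T)=\det(P)\Pf(M)$, but it is the same fact), and in Cases~3 and~4 use the vanishing identities of (\ref{Q_(r,-r)}) together with the Laplace expansion along the row of the negative part, with the single survivor $Q_{(-r,r)}=(-1)^r2$ and the sign $(-1)^{i-1}(-1)^{i+1}=1$. The only cosmetic difference is that the paper deduces Case~4 by observing that the ``positive-before-negative'' configuration already produced an all-zero row in its Case~3 display, whereas you recompute it directly; both amount to the same expansion.
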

    
    \begin{proof}
        In the first case, we see that $\lambda$ is invariant under $B_i$ if $\lambda_i=\lambda_{i+1}$. Next, note that swapping $\lambda_i$ and $\lambda_{i+1}$ corresponds to interchanging row $i$ with row $i+1$, and column $i$ with column $i+1$, of the matrix $M(\lambda)$ only when $\lambda_i+\lambda_{i+1}\neq0$ due to (\ref{Q_(r,-r)}). In this case, doing so changes the sign of the Pfaffian.
        
        By Proposition \ref{Q_lambda0}, we may assume $n$ is even. So, suppose $\lambda_i+\lambda_{i+1}=0$ and $\lambda_i>0$. Then we have $\lambda_{i+1}=-\lambda_i<0$, and so we see that $Q_\lambda$ is the Pfaffian of the matrix
        \[\begin{pNiceArray}{ccc|cc|ccc}
            \Block{3-3}<\fontsize{20}{50}>{M_1} &&& Q_{(\lambda_1,\lambda_i)} & 0 & \Block{3-3}<\fontsize{20}{50}>{M_2} && \\
             &&& \vdots & \vdots &&& \\
             &&& Q_{(\lambda_{i-1},\lambda_i)} & 0 &&& \\\hline
            -Q_{(\lambda_1,\lambda_i)} & \cdots & -Q_{(\lambda_{i-1},\lambda_i)} & 0 & 0 & Q_{(\lambda_i,\lambda_{i+2})} & \cdots & Q_{(\lambda_i,\lambda_n)}\\
            0 & \cdots & 0 & 0 & 0 & 0 & \cdots & 0 \\\hline
            \Block{3-3}<\fontsize{20}{50}>{M_3} &&& -Q_{(\lambda_i,\lambda_{i+2})} & 0 & \Block{3-3}<\fontsize{20}{50}>{M_4} && \\
             &&& \vdots & \vdots &&& \\
             &&& -Q_{(\lambda_i,\lambda_n)} & 0 &&&
        \end{pNiceArray},\]
        where
        \[
        \begin{pmatrix}
            M_1 & M_2\\
            M_3 & M_4
        \end{pmatrix}=M(\lambda\setminus\{\lambda_i,\lambda_{i+1}\}).\]
        Thus, we see that $Q_\lambda=0$. However, we also see that $Q_{B_i\lambda}$ is the Pfaffian of the matrix
        \[\footnotesize{\begin{pNiceArray}{ccc|cc|ccc}
                \Block{3-3}<\fontsize{20}{50}>{M_1} &&& 0 & Q_{(\lambda_1,\lambda_{i})} & \Block{3-3}<\fontsize{20}{50}>{M_2} && \\
                 &&& \vdots & \vdots &&& \\
                 &&& 0 & Q_{(\lambda_{i-1},\lambda_{i})} &&& \\\hline
                0 & \cdots & 0 & 0 & (-1)^{\lambda_i}2 & 0 & \cdots & 0\\
                -Q_{(\lambda_1,\lambda_{i})} & \cdots & -Q_{(\lambda_{i-1},\lambda_{i})} & -(-1)^{\lambda_i}2 & 0 & Q_{(\lambda_{i},\lambda_{i+2})} & \cdots & Q_{(\lambda_{i},\lambda_n)} \\\hline
                \Block{3-3}<\fontsize{20}{50}>{M_3} &&& 0 & -Q_{(\lambda_{i},\lambda_{i+2})} & \Block{3-3}<\fontsize{20}{50}>{M_4} && \\
                 &&& \vdots & \vdots &&& \\
                 &&& 0 & -Q_{(\lambda_{i},\lambda_n)} &&&
            \end{pNiceArray}.}\]
        Therefore, if we expand along the $i$th row/column, we get
        \begin{align*}
            Q_{B_i\lambda}&=(-1)^{i-1}(-1)^{i+1}(-1)^{\lambda_i}2\Pf(M(\lambda\setminus\{\lambda_i,\lambda_{i+1}\}))\\
            &=(-1)^{\lambda_i}2Q_{\lambda\setminus\{\lambda_i,\lambda_{i+1}\}}.
        \end{align*}
        Hence, we have proven the third case, and the fourth case follows from this argument since $B_i^2\lambda=\lambda$.
    \end{proof}

    Therefore, we see that for any composition $\lambda\in\Z^n$, we may reorder the parts to be weakly decreasing except possibly with negative parts first, and doing so will only change $Q_\lambda$ by a nonzero coefficient. 

    \subsection{Removing Negative Parts}

    Now, note that if a composition $\lambda$ has a repeated nonzero part, then $M(\lambda)$ has two identical rows/columns, and hence $Q_\lambda=0$. So, we will consider functions of the form $Q_{p\lambda}$, where $\lambda\in\Z^n$ is a strict partition and $p\in\Z$ is a (possibly negative) integer. In order to write a formula for $Q_{(-p)\lambda}$ ($p>0$), we first define, for any strict composition $\lambda\in\Z^n$ and any integer $p\in\Z$,
    \[Q_{\lambda\setminus\{p\}}:=
            \begin{cases}
                Q_{\lambda\setminus\{\lambda_i\}}&\text{if }p=\lambda_i,\\
                0&\text{otherwise}.
              \end{cases}\]
    In particular, we have $Q_{(p)\setminus\{p\}}=Q_0=1$. It is convenient to think of $\lambda\setminus\{p\}$ as a set difference \emph{only} when $p=\lambda_i$ for some $i$. Note that the notation $Q_{\lambda\setminus\{\lambda_i\}}$ specifies the \emph{index} of the part being remove (and so in that case $\lambda$ need not be strict), whereas $Q_{\lambda\setminus\{p\}}$ specifies the \emph{value} of the part being removed (hence the requirement that $\lambda$ is strict).
    
    \begin{proposition}\label{Q_(-p)lambda}
        Let $p\in\Z$, $p>0$, be a positive integer and let $\lambda\in\Z^n$ be a strict partition, then
        \[Q_{(-p)\lambda}=(-1)^{p+\ind(\lambda,p)+1}2Q_{\lambda\setminus\{p\}}.\]
    \end{proposition}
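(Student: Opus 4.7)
The plan is to compute $Q_{(-p)\lambda}$ directly from the Pfaffian definition by expanding along the row corresponding to the inserted part $-p$. By Proposition \ref{Q_lambda0}, appending a trailing zero to $(-p)\lambda$ does not change its value, so we may assume the composition used in the Pfaffian has even length; alternatively, the calculation below goes through verbatim in the odd case after adjoining a single trailing $0$ column, whose new entries all vanish.

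The key observation is that the first row of $M((-p)\lambda)$ is almost entirely zero. Its $(1,j)$-entry equals $Q_{(-p,\lambda_{j-1})}$, and from (\ref{Q_(r,-r)}) together with $Q_{(-p,0)}=q_{-p}=0$, we have $Q_{(-p,s)}=0$ for every integer $s\geq 0$ with $s\neq p$, while $Q_{(-p,p)}=(-1)^p2$. Since $\lambda$ is strict, at most one part $\lambda_k$ equals $p$, so the first row contains at most one nonzero entry.

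If $p$ is not a part of $\lambda$, then the first row vanishes identically and $Q_{(-p)\lambda}=0$; this agrees with the right-hand side, which is $0$ because $Q_{\lambda\setminus\{p\}}=0$ by the extended definition. Otherwise, writing $k=\ind(\lambda,p)$ so that $\lambda_k=p$, the only nonzero first-row entry is $M_{1,k+1}=(-1)^p2$, and the Pfaffian Laplace expansion along row $1$ recalled in the preliminaries yields
\[Q_{(-p)\lambda}=(-1)^{k+1}\cdot(-1)^p\cdot 2\cdot\Pf M_{1,k+1}.\]
The minor $M_{1,k+1}$, obtained by deleting the rows and columns indexed by the parts $-p$ and $p$, is precisely $M(\lambda\setminus\{\lambda_k\})$, so $\Pf M_{1,k+1}=Q_{\lambda\setminus\{\lambda_k\}}=Q_{\lambda\setminus\{p\}}$; collecting the signs gives the claimed identity.

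The argument presents no substantive obstacle beyond the Pfaffian Laplace expansion once the sparsity of the first row is observed. The only care required is to track the sign $(-1)^{k+1}$ from the expansion and to handle the parity of $n$ uniformly via Proposition \ref{Q_lambda0}.
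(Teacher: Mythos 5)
Your proof is correct and follows essentially the same route as the paper's: expand the Pfaffian of $M((-p)\lambda)$ along the row of the part $-p$, observe via (\ref{Q_(r,-r)}) that this row is zero except possibly for a single entry $(-1)^p2$ at the position of $\lambda_k=p$, and read off the sign $(-1)^{p+k+1}$ from the Laplace expansion. The case split on whether $p$ occurs in $\lambda$ and the parity handling via Proposition \ref{Q_lambda0} also match the paper.
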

    
    \begin{proof}
        We may assume $n$ is odd, so then we have
        \[Q_{(-p)\lambda}=\Pf
            \begin{pNiceArray}{c|ccc}
                0 & Q_{(p,\lambda_1)} & \cdots & Q_{(p,\lambda_n)} \\\hline
                -Q_{(p,\lambda_1)} & \Block{3-3}<\fontsize{30}{50}>{M(\lambda)}\\
                \vdots\\
                -Q_{(p,\lambda_n)}
            \end{pNiceArray}.\]
            If $p\neq \lambda_j$ for all $j$, then by (\ref{Q_(r,-r)}) we see that the first row/column is all zeros. Therefore the Pfaffian is $0$, and so the identity holds since by definition we have $Q_{\lambda\setminus\{p\}}=0$. Otherwise, suppose $p=\lambda_j$ for some $j$. Then by (\ref{Q_(r,-r)}), we have
        \[Q_{(-p)\lambda}=\Pf\begin{pNiceArray}{c|ccccccc}
            0 & 0& \cdots & 0 & (-1)^p2 & 0 & \cdots & 0\phantom{..} \\\hline
            0 & \Block{7-7}<\fontsize{30}{50}>{M(\lambda)}\\
            \vdots\\
            0\\
            -(-1)^p2\\
            0\\
            \vdots\\
            0
        \end{pNiceArray},\]
        where the nonzero entry of the first row appears in the $(j+1)$th column. So, if we expand along the first row/column, we get
        \begin{align*}
            Q_{(-p)\lambda}&=(-1)^{j+1}(-1)^p2\Pf(M(\lambda\setminus\{\lambda_j\})\\
            &=(-1)^{p+j+1}2Q_{\lambda\setminus\{\lambda_j\}}.
        \end{align*}
    \end{proof}

    If $\lambda$ is a strict composition with multiple negative parts, then we may repeat this argument to get a formula for $Q_\lambda$. Consequently, we have shown that the set of functions $Q_\lambda$ for strict partitions $\lambda\in\mathcal{SP}$ is a spanning set of $\Gamma$.

    \section{Vertex Operator Identity}
    
    The purpose of this section is to use Pfaffian formulation of Schur's Q-functions to prove a particular vertex operator identity. First, we must introduce some additional definitions and notation.

    \subsection{Inner Products and Adjoints}

    We define an inner product $(\cdot,\cdot)$ on $\Gamma$ by requiring that the $Q_\lambda$ form an orthogonal basis. Thus, we set
    \[(Q_\lambda,Q_\mu)=2^{\ell(\lambda)}\delta_{\lambda\mu}\]
    for strict partitions $\lambda$ and $\mu$. Let $z$ be an indeterminate; in this section we will work over the ring $\C[z]\ox\Gamma$ of Schur's $Q$-functions with coefficients in $\C[z]$. We extend the inner product to $\C[z]\ox\Gamma$ by $\C[z]$-linearity.
    
    For any $F\in\Gamma$, we let $F^\perp$ denote the adjoint of multiplication by $F$ with respect to $(\cdot,\cdot)$,
    \[(FQ_\lambda,Q_\mu)=(Q_\lambda,F^\perp Q_\mu).\]
    If $F=\sum_n F_nz^n$ is an infinite series, we denote $F^\perp:=\sum_n z^nF_n^\perp$.
    
    \begin{proposition}
        For all partitions $\lambda$ and $\mu$, we have
        \[Q_\mu^\perp Q_\lambda=2^{\ell(\mu)}Q_{\lambda/\mu}.\]
    \end{proposition}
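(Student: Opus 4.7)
The plan is to reduce the identity to a computation of structure constants and then close by invoking the Cauchy identity for Schur's $Q$-functions. First, by the defining property of the adjoint and the nondegeneracy of the inner product, the claim $Q_\mu^\perp Q_\lambda=2^{\ell(\mu)}Q_{\lambda/\mu}$ is equivalent to
\[(Q_\lambda,Q_\mu Q_\nu)=2^{\ell(\mu)}(Q_{\lambda/\mu},Q_\nu)\quad\text{for all }\nu\in\mathcal{SP},\]
where the left-hand side equals $(Q_\mu^\perp Q_\lambda,Q_\nu)$ by the adjoint relation. Expanding $Q_\mu Q_\nu=\sum_\sigma c^\sigma_{\mu\nu}Q_\sigma$ in the orthogonal basis, this becomes the statement that the coefficient of $Q_\nu$ in $Q_{\lambda/\mu}$ equals $2^{\ell(\lambda)-\ell(\mu)-\ell(\nu)}c^\lambda_{\mu\nu}$. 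If either $\lambda$ or $\mu$ has a repeated positive part, both sides vanish by a duplicate-row/column argument applied to the blocks of $M(\lambda,\mu)$, so we may restrict to strict partitions.

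To produce the required expansion of $Q_{\lambda/\mu}$, I would invoke the classical Cauchy identity
\[\prod_{i,j}\frac{1+x_iy_j}{1-x_iy_j}=\sum_{\lambda\in\mathcal{SP}}2^{-\ell(\lambda)}Q_\lambda(x)Q_\lambda(y).\]
Splitting the variables as $x=x'\cup x''$ and equating the two ways of expanding the kernel (directly, or as the product of two kernels over disjoint $x$-sets) yields, after matching coefficients of $Q_\nu(y)$, the coproduct formula
\[Q_\lambda(x',x'')=\sum_\mu Q_\mu(x')\,S_{\lambda/\mu}(x''),\qquad S_{\lambda/\mu}:=\sum_\nu 2^{\ell(\lambda)-\ell(\mu)-\ell(\nu)}c^\lambda_{\mu\nu}Q_\nu.\]

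The main obstacle is then to identify the Pfaffian $Q_{\lambda/\mu}=\Pf M(\lambda,\mu)$ used in this paper with $S_{\lambda/\mu}$. For strict $\lambda,\mu$, the Pfaffian definition here reduces to that of Nimmo \cite{nimmo:1990} and J\'{o}zefiak--Pragacz \cite{jozefiak-pragacz:1991} (as the paper observes following Proposition \ref{Q_lambda0}), and those works prove that the Pfaffian skew function agrees with the coproduct skew function. A self-contained variant would proceed by induction on $\ell(\mu)$: use a Laplace-type expansion of $\Pf M(\lambda,\mu)$ along a row/column corresponding to a part of $\mu$ (exploiting the block structure of $M(\lambda,\mu)$) and match the resulting recursion against the recursion for $Q_\mu Q_\nu$ in the $Q$-basis, using the part-removal techniques developed in Section 3. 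Granting $Q_{\lambda/\mu}=S_{\lambda/\mu}$, both sides of the reduced identity equal $2^{\ell(\lambda)}c^\lambda_{\mu\nu}$, completing the proof.
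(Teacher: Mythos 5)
Your argument is correct in outline, but it takes a longer route than the paper and, in the end, rests on the same external input. The paper's proof is two lines: it quotes the J\'ozefiak--Pragacz duality $(Q_{\lambda/\mu},F)=(Q_\lambda,2^{-\ell(\mu)}Q_\mu F)$ for all $F\in\Gamma$, applies the definition of the adjoint to the right-hand side, and concludes by nondegeneracy of the inner product. Your reduction to the statement that the coefficient of $Q_\nu$ in $Q_{\lambda/\mu}$ equals $2^{\ell(\lambda)-\ell(\mu)-\ell(\nu)}c^\lambda_{\mu\nu}$ is exactly the specialization $F=Q_\nu$ of that duality, and your normalizations (including the coproduct formula $Q_\lambda(x',x'')=\sum_\mu Q_\mu(x')S_{\lambda/\mu}(x'')$ without a power of $2$) check out. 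What you gain from the Cauchy-identity detour is a conceptual explanation of \emph{why} the duality holds, namely that the coproduct skew function $S_{\lambda/\mu}$ is tautologically adjoint to multiplication by $P_\mu=2^{-\ell(\mu)}Q_\mu$; what you do not gain is independence from the literature, because the step you correctly flag as the main obstacle --- identifying the Pfaffian $\Pf M(\lambda,\mu)$ with $S_{\lambda/\mu}$ --- is precisely the theorem of \cite{nimmo:1990} and \cite{jozefiak-pragacz:1991}, and is equivalent to the identity the paper cites directly. So the two proofs are logically on the same footing, with yours carrying extra (standard but nontrivial) machinery. Your proposed ``self-contained variant'' via induction on $\ell(\mu)$ and Laplace expansion of $\Pf M(\lambda,\mu)$ is only a sketch: matching the resulting recursion against the recursion for the structure constants $c^\lambda_{\mu\nu}$ is genuinely the hard content of the J\'ozefiak--Pragacz theorem, so if you intend that route you would need to carry it out in detail rather than assert it. As written, with the citation in place, your proof is acceptable but not more elementary than the paper's.
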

    
    \begin{proof}
        Recall \cite{jozefiak-pragacz:1991} that for any $F\in\Gamma$ we have
        \[\left(Q_{\lambda/\mu},F \right)=\left(Q_\lambda,2^{-\ell(\mu)}Q_\mu F\right),\]
        which is a fundamental identity of the skew functions $Q_{\lambda/\mu}$. Using linearity and the definition of adjoint on the RHS, we get
        \[\left(Q_\lambda,2^{-\ell(\mu)}Q_\mu F\right)=\left(2^{-\ell(\mu)}Q_\mu^\perp Q_\lambda,F\right).\]
        Together, we have
        \[Q_{\lambda/\mu}=2^{-\ell(\mu)}Q_\mu^\perp Q_\lambda.\]
    \end{proof}

    In particular, we will make use of the following case where $\ell(\mu)\leq1$.
    
    \begin{corollary}\label{adjoint}
        For any partition $\lambda$, we have
        \begin{align*}
            q_r^\perp Q_\lambda&=2Q_{\lambda/(r)}\qquad(r\geq1),\\
            q_0^\perp Q_\lambda&=Q_{\lambda}.
        \end{align*}
    \end{corollary}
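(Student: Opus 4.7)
The approach is to derive both identities by specializing the preceding proposition $Q_\mu^\perp Q_\lambda = 2^{\ell(\mu)} Q_{\lambda/\mu}$ to a one-part composition $\mu = (r)$. The only preparation needed is to identify $Q_{(r)}$ with $q_r$, after which the first case is immediate; the second case will be handled separately by exploiting the fact that $q_0 = 1$.

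For the first case ($r \geq 1$), I would first note that the composition $(r)$ has odd length, so by definition $Q_{(r)} = \Pf M((r,0))$. This is the Pfaffian of a $2 \times 2$ skew-symmetric matrix whose single off-diagonal entry, upon using $Q_{(r,0)} = q_r$ together with the antisymmetry $Q_{(0,r)} = -Q_{(r,0)}$ (which holds since $r > 0$), evaluates to $q_r$. Hence $Q_{(r)} = q_r$. Since $\ell((r)) = 1$, applying the preceding proposition with $\mu = (r)$ yields
\[q_r^\perp Q_\lambda = Q_{(r)}^\perp Q_\lambda = 2^{\ell((r))} Q_{\lambda/(r)} = 2 Q_{\lambda/(r)}.\]

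For the second case ($r = 0$), since $q_0 = 1$ is the multiplicative identity of $\Gamma$, multiplication by $q_0$ is the identity operator on $\Gamma$, and its adjoint with respect to $(\cdot,\cdot)$ is again the identity. Therefore $q_0^\perp Q_\lambda = Q_\lambda$.

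Since the only real computation required is the direct evaluation $Q_{(r)} = q_r$ via a $2 \times 2$ Pfaffian, and the rest is pure bookkeeping of $\ell((r)) = 1$ and $q_0 = 1$, no step of this proof presents a genuine obstacle; the corollary is simply a convenient specialization of the preceding proposition to $\mu$ of length at most one.
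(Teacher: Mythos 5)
Your proof is correct and matches the paper's intent: the paper states this corollary as an immediate specialization of the preceding proposition to $\mu$ of length at most one, giving no further argument. Your verification that $Q_{(r)}=q_r$ via the $2\times2$ Pfaffian and your direct handling of $q_0=1$ just make explicit the bookkeeping the paper leaves implicit.
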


    \subsection{The Main Identity}

    The following identity has been proven by Jing \cite{jing:1991a} for Schur Q-functions $Q_\lambda(x_1,x_2,\ldots;t)$ and in general for Hall-Littlewood functions \cite{jing:1991b} using the language of vertex operators, noting that Hall-Littlewood functions specialize to Schur's $Q$-functions when $t=-1$ and to Schur functions  when $t=0$. The analoguous identity for Schur functions was proven by Carre and Thibon \cite{carre-thibon:1992} using a determinantal approach with the Jacobi-Trudi identity.
    
    \begin{theorem}\label{vertex operator identity}
        Let $\lambda$ be a partition, then we have
        \[\sum_{p\in\Z}Q_{p\lambda}z^p=\kappa_z\cdot \kappa_{-1/z}^\perp Q_\lambda.\]
    \end{theorem}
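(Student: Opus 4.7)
The plan is to use Corollary \ref{adjoint} to compute $\kappa_{-1/z}^\perp Q_\lambda$ explicitly, reducing the theorem to a concrete identity for $Q_{p\lambda}$ that can then be verified by Pfaffian expansion. Since $\kappa_{-1/z} = \sum_n (-1)^n q_n z^{-n}$, Corollary \ref{adjoint} yields
\[\kappa_{-1/z}^\perp Q_\lambda = Q_\lambda + 2\sum_{r\geq 1}(-1)^r z^{-r} Q_{\lambda/(r)}.\]
Multiplying by $\kappa_z = \sum_n q_n z^n$ and extracting the coefficient of $z^p$, the theorem reduces to proving
\[Q_{p\lambda} = q_p Q_\lambda + 2\sum_{r\geq 1}(-1)^r q_{p+r} Q_{\lambda/(r)} \qquad (p\in\Z).\]

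To establish this reduced identity, by Proposition \ref{Q_lambda0} I may assume $n=\ell(\lambda)$ is odd, so that $p\lambda$ has even length and $M(p\lambda)$ is a square matrix of even size. Expanding $\Pf M(p\lambda)$ along the first row/column and applying the antisymmetry $Q_{(\lambda_i,p)} = -Q_{(p,\lambda_i)}$ in $\Gamma$ gives
\[Q_{p\lambda} = \sum_{i=1}^n (-1)^{i+1} Q_{(p,\lambda_i)} Q_{\lambda\setminus\{\lambda_i\}}.\]
Substituting the two-part formula (\ref{Q_(r,s)}), namely $Q_{(p,\lambda_i)} = q_p q_{\lambda_i} + 2\sum_{k\geq 1}(-1)^k q_{p+k} q_{\lambda_i - k}$, and interchanging the order of summation, the $q_p q_{\lambda_i}$ contribution reduces via Proposition \ref{alternating sum identity} (in the form $Q_\lambda = \sum_i(-1)^{i+1} q_{\lambda_i} Q_{\lambda\setminus\{\lambda_i\}}$ for $n$ odd) to $q_p Q_\lambda$, while the remaining contribution groups as
\[2\sum_{k\geq 1}(-1)^k q_{p+k} \sum_{i=1}^n (-1)^{i+1} q_{\lambda_i - k} Q_{\lambda\setminus\{\lambda_i\}}.\]

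The final step is to recognize the inner sum as $Q_{\lambda/(k)}$. This follows from a short auxiliary Pfaffian expansion: expanding $Q_{\lambda/(k)} = \Pf M(\lambda,(k))$ along the last row/column of the bordered matrix, whose off-diagonal entries form the column $N(\lambda,(k)) = (q_{\lambda_j - k})_{j=1}^n$, yields
\[Q_{\lambda/(k)} = \sum_{i=1}^n (-1)^{i+1} q_{\lambda_i - k} Q_{\lambda\setminus\{\lambda_i\}} \qquad (n \text{ odd}),\]
which is a skew analog of Proposition \ref{alternating sum identity} proved by the same sign bookkeeping. Substituting this into the previous display completes the proof. The main obstacle will be keeping the $(-1)^{i+1}$ signs from the three Pfaffian expansions --- of $Q_{p\lambda}$, $Q_\lambda$, and $Q_{\lambda/(k)}$ --- consistently aligned so that the three pieces combine correctly; a secondary subtlety is the edge case $p+\lambda_i = 0$, where the naive antisymmetry $Q_{(\lambda_i,p)} = -Q_{(p,\lambda_i)}$ fails at the level of formula (\ref{Q_(r,s)}) but the discrepancy vanishes as an identity in $\Gamma$ via the underlying relation $\kappa_z\kappa_{-z}=1$ among the $q_n$.
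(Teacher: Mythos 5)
Your argument is correct and follows the paper's proof essentially step for step: the expansion of $\kappa_z\cdot\kappa_{-1/z}^\perp Q_\lambda$ via Corollary \ref{adjoint} is the paper's Lemma \ref{expanded RHS}, the first-row Pfaffian expansion of $Q_{p\lambda}$ is Lemma \ref{Q_plam}, the identification of the inner sum $\sum_i(-1)^{i+1}q_{\lambda_i-k}Q_{\lambda\setminus\{\lambda_i\}}$ with $Q_{\lambda/(k)}$ is Lemma \ref{Q_lam/(r)}, and absorbing the $q_pq_{\lambda_i}$ contribution via Proposition \ref{alternating sum identity} is exactly the paper's proof of Theorem \ref{Q_plam=q_pQ_lam+2...}. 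Your only deviation is reducing to $n$ odd at the outset via Proposition \ref{Q_lambda0}, a shortcut the paper itself endorses in the remark following Theorem \ref{Q_plam=q_pQ_lam+2...}.

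One side remark is wrong, though it does not damage the proof. At $p+\lambda_i=0$ with $\lambda_i>0$ the discrepancy between $Q_{(\lambda_i,p)}$ and $-Q_{(p,\lambda_i)}$ is $Q_{(\lambda_i,-\lambda_i)}+Q_{(-\lambda_i,\lambda_i)}=2(-1)^{\lambda_i}$ by (\ref{Q_(r,-r)}), which does \emph{not} vanish in $\Gamma$; the relation $\kappa_z\kappa_{-z}=1$ is precisely what produces this nonzero value rather than killing it. The correct resolution is that no antisymmetry step is needed at all: the first row of $M(p\lambda)$ is built directly from the entries $Q_{(p,\lambda_i)}$, exactly as displayed in the proofs of Lemma \ref{Q_plam} and Proposition \ref{Q_(-p)lambda}, so the expansion you wrote down is the definition, not a consequence of antisymmetry. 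Had the entries genuinely been $-Q_{(\lambda_i,p)}$, the theorem would fail for such $p$: for $\lambda=(1)$ and $p=-1$ one has $Q_{(-1,1)}=-2=q_{-1}Q_{(1)}+2(-1)q_0Q_{(1)/(1)}$, whereas the matrix with entry $-Q_{(1,-1)}=0$ has Pfaffian $0$.
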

    
    Our first step is to expand $\kappa_z\cdot \kappa_{-1/z}^\perp Q_\lambda$ as a sum.
    
    \begin{lemma}\label{expanded RHS}
        Let $\lambda$ be a partition, then we have
        \[\kappa_z\cdot \kappa_{-1/z}^\perp Q_\lambda=\sum_{p\in\Z}z^p\left(q_pQ_\lambda+2\sum_{r\geq1}(-1)^rq_{p+r}Q_{\lambda/(r)}\right).\]
    \end{lemma}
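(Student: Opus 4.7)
The plan is to unfold both sides by applying the definitions of $\kappa_z$, $\kappa_{-1/z}$, and Corollary \ref{adjoint}, and then rearrange the double sum by collecting terms of each power of $z$.

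First I would write $\kappa_z = \sum_{m\in\Z} q_m z^m$ and $\kappa_{-1/z} = \sum_{n\in\Z}(-1)^n q_n z^{-n}$, so that by definition of the adjoint (applied coefficient by coefficient, exactly as in the definition $F^\perp = \sum_n z^n F_n^\perp$ adapted with $z \mapsto -1/z$) we have
\[\kappa_{-1/z}^\perp = \sum_{n\in\Z}(-1)^n z^{-n} q_n^\perp.\]
Since $q_n = 0$ for $n<0$, only terms with $n\geq 0$ contribute.

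Next I would apply this operator to $Q_\lambda$ using Corollary \ref{adjoint}: the $n=0$ term yields $q_0^\perp Q_\lambda = Q_\lambda$, and each $n = r \geq 1$ term yields $(-1)^r z^{-r} \cdot 2 Q_{\lambda/(r)}$. Collecting,
\[\kappa_{-1/z}^\perp Q_\lambda = Q_\lambda + 2\sum_{r\geq 1}(-1)^r z^{-r} Q_{\lambda/(r)}.\]

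Finally, I would multiply on the left by $\kappa_z = \sum_m q_m z^m$ and expand:
\[\kappa_z\cdot \kappa_{-1/z}^\perp Q_\lambda = \sum_{m\in\Z} q_m z^m Q_\lambda + 2\sum_{m\in\Z}\sum_{r\geq 1}(-1)^r q_m z^{m-r} Q_{\lambda/(r)}.\]
In the first sum I would set $p=m$, and in the second I would reindex via $p = m-r$ (equivalently $m = p+r$), so that $q_m = q_{p+r}$. The result groups naturally by powers of $z^p$, yielding the stated formula.

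There is no real obstacle here; the proof is a bookkeeping exercise with formal power series. The only minor point of care is the reindexing of the double sum and the vanishing of $q_n$ for $n<0$, which ensures that the sum over $n$ in $\kappa_{-1/z}^\perp$ effectively starts at $0$ and hence the only contribution to the constant term of $\kappa_{-1/z}^\perp Q_\lambda$ is $Q_\lambda$ itself.
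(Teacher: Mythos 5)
Your proof is correct and follows essentially the same route as the paper: expand both generating series, apply Corollary \ref{adjoint} to the terms $q_r^\perp Q_\lambda$, and reindex the double sum via $p = m - r$ (the paper performs the reindexing before invoking the corollary, but this is an immaterial reordering). The bookkeeping, including the observation that $q_n = 0$ for $n < 0$ truncates the sums, matches the paper's argument.
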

    
    \begin{proof}
        First, we multiply $\kappa_z$ and $\kappa_{-1/z}^\perp$ to get
        \begin{align*}
            \kappa_z\cdot \kappa_{-1/z}^\perp Q_\lambda&=\sum_{i\geq0}q_iz^i\cdot\sum_{j\geq0} (-z)^{-j}q_j^\perp Q_\lambda\\
            &=\sum_{i\geq0}\sum_{j\geq0}q_iz^i(-z)^{-j}q_j^\perp Q_\lambda\\
            &=\sum_{p\in\Z}\sum_{r\geq0}q_{p+r}z^{p+r}(-z)^{-r}q_r^\perp Q_\lambda
        \end{align*}
        since $q_{p+r}=0$ for $p+r<0$. Then, we rewrite this to get
        \[\sum_{p\in\Z}z^p\sum_{r\geq0}(-1)^rq_{p+r}q_r^\perp Q_\lambda.\]
        Finally, we apply Corollary \ref{adjoint} to $q_r^\perp Q_\lambda$ in the sum to get
        \begin{align*}
            &\sum_{p\in\Z}z^p\left(q_{p+0}q_0^\perp Q_\lambda+\sum_{r\geq1}(-1)^rq_{p+r}q_r^\perp Q_\lambda\right)\\
            &\quad=\sum_{p\in\Z}z^p\left(q_pQ_\lambda+2\sum_{r\geq1}(-1)^rq_{p+r}Q_{\lambda/(r)}\right)
        \end{align*}
    \end{proof}
    
    So, to prove Theorem  \ref{vertex operator identity}, it suffices to show that
    \[Q_{p\lambda}=q_pQ_\lambda+2\sum_{r\geq1}(-1)^rq_{p+r}Q_{\lambda/(r)}.\]
    
    We need several more identities to finish the proof. First, we will need to write both $Q_{p\lambda}$ and $Q_{\lambda/(r)}$ as a sums of Schur's $Q$-functions of the form $Q_{\lambda\setminus\{\lambda_i\}}$.
    
    \begin{lemma}\label{Q_plam}
        Let $\lambda\in\Z^n$ be a partition, then for all integers $p\in\Z$ we have
        \[Q_{p\lambda}=
            \begin{cases}
                \phantom{q_pQ_\lambda}-\dsp\sum_{i=1}^n(-1)^iQ_{(p,\lambda_i)}Q_{\lambda\setminus\{\lambda_i\}} & \text{if }n\text{ is odd},\\
                q_pQ_\lambda-\dsp\sum_{i=1}^n(-1)^iQ_{(p,\lambda_i)}Q_{\lambda\setminus\{\lambda_i\}} & \text{if }n\text{ is even}.
            \end{cases}
        \]
    \end{lemma}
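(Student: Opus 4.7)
The plan is to apply the Laplace expansion of the Pfaffian along the row and column indexed by the leading part $p$, handling the two parity cases separately according to the definition of $Q_{p\lambda}$.

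When $n$ is odd, the composition $p\lambda$ has even length $n+1$, so $Q_{p\lambda} = \Pf M(p\lambda)$ directly. I would expand along the first row. Writing $\nu = p\lambda$ with $\nu_1 = p$ and $\nu_j = \lambda_{j-1}$ for $j \geq 2$, the off-diagonal entries of row $1$ are $Q_{(p,\lambda_{j-1})}$ for $j = 2,\dots,n+1$, and deleting rows and columns $1$ and $j$ leaves the matrix $M(\lambda\setminus\{\lambda_{j-1}\})$, whose Pfaffian equals $Q_{\lambda\setminus\{\lambda_{j-1}\}}$ since $\lambda\setminus\{\lambda_{j-1}\}$ has even length $n-1$. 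Applying the Laplace expansion with $i=1$ and reindexing $i = j - 1$ produces the odd-case identity, with the sign $(-1)^j = -(-1)^i$ accounting for the leading minus sign.

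When $n$ is even, $p\lambda$ has odd length $n+1$, so $Q_{p\lambda} = \Pf M(p\lambda 0)$ is an $(n+2) \times (n+2)$ Pfaffian. Expanding along the first row yields the same kind of terms for $j = 2,\dots,n+1$, where Proposition~\ref{Q_lambda0} lets me collapse the sub-Pfaffians $\Pf M((\lambda\setminus\{\lambda_{j-1}\})0)$ to $Q_{\lambda\setminus\{\lambda_{j-1}\}}$, plus one extra term at $j = n+2$: its entry is $Q_{(p,0)} = q_p$ and its minor is $M(\lambda)$, contributing $(-1)^{n+2} q_p Q_\lambda = q_p Q_\lambda$ since $n$ is even. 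Summing these two contributions and reindexing $i = j - 1$ yields the even-case identity.

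The expected obstacle is modest: the only real work is the sign bookkeeping from the Laplace formula together with the reindexing shift $i = j - 1$. The identification of the relevant minors with $M(\lambda\setminus\{\lambda_{j-1}\})$ (or its $0$-extension), and the collapse of the extended sub-Pfaffians to $Q_{\lambda\setminus\{\lambda_{j-1}\}}$ via Proposition~\ref{Q_lambda0} in the even case, follows directly from the matrix definition, so no identities beyond Proposition~\ref{Q_lambda0} are required.
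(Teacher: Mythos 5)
Your proof is correct and takes essentially the same route as the paper's: a Laplace expansion of $\Pf M(p\lambda)$ (resp.\ $\Pf M(p\lambda0)$) along the first row/column, with the minors identified as $M(\lambda\setminus\{\lambda_{j-1}\})$ or its $0$-extension and the extra $j=n+2$ term supplying $q_pQ_\lambda$ in the even case. The only cosmetic difference is that you invoke Proposition~\ref{Q_lambda0} to collapse $\Pf M\bigl((\lambda\setminus\{\lambda_{j-1}\})0\bigr)$ to $Q_{\lambda\setminus\{\lambda_{j-1}\}}$, whereas this is already the definition of $Q_{\lambda\setminus\{\lambda_{j-1}\}}$ since that composition has odd length.
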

    
    \begin{proof}
        If $n$ is odd, then $n+1$ is even, so we have
        \begin{align*}
            Q_{p\lambda}&=\Pf\begin{pNiceArray}{c|ccc}
                0 & Q_{(p,\lambda_1)} & \cdots & Q_{(p,\lambda_n)} \\\hline
                -Q_{(p,\lambda_1)} & \Block{3-3}<\fontsize{30}{50}>{M(\lambda)}\\
                \vdots\\
                -Q_{(p,\lambda_n)}
            \end{pNiceArray}\\
            &=\sum_{j=1}^n(-1)^{j+1}Q_{(p,\lambda_j)}\Pf(M(\lambda\setminus\{\lambda_j\}))
        \end{align*}
        by taking the Laplace expansion along the first row/column. Then, note that $\Pf(M(\lambda\setminus\{\lambda_j\}))=Q_{\lambda\setminus\{\lambda_j\}}$.
        
        If $n=$ is even, then we have
        \begin{align*}
            Q_{p\lambda}&=
                \Pf\begin{pNiceArray}{c|cccc}
                    0 & Q_{(p,\lambda_1)} & \cdots & Q_{(p,\lambda_n)} & Q_{(p,0)} \\\hline
                    -Q_{(p,\lambda_1)} & \Block{4-4}<\fontsize{30}{50}>{M(\lambda0)}\\
                    \vdots\\
                    -Q_{(p,\lambda_n)}\\
                    -Q_{(p,0)}
                \end{pNiceArray}\\
                &=\sum_{j=1}^n(-1)^{j+1}Q_{(p,\lambda_j)}\Pf(M(\lambda0\setminus\{\lambda_j\}))+(-1)^{n+2}Q_{(p,0)}\Pf(M(\lambda0))
        \end{align*}
        by taking the Laplace expansion along the first row/column. Then, note that $\Pf(M(\lambda0\setminus\{\lambda_j\}))=Q_{\lambda\setminus\{\lambda_j\}}$. Additionally, we have that $Q_{(p,0)}=q_p$, and $\Pf M(\lambda0)=Q_\lambda$.
    \end{proof}
    
    The following identity is a special case of an identity by J{\'o}zefiak and Pragacz \cite{jozefiak-pragacz:1991}.
    
    \begin{lemma}[J{\'o}zefiak-Pragacz]\label{Q_lam/(r)}
        Let $\lambda\in\Z^n$ be a partition, then for all positive integers $r\in\Z^+$ we have
        \[Q_{\lambda/(r)}=-\sum_{i=1}^n(-1)^iq_{\lambda_i-r}Q_{\lambda\setminus\{\lambda_i\}}.\]
    \end{lemma}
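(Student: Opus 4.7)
The plan is to deduce this identity by a single Laplace-type Pfaffian expansion of the definition of $Q_{\lambda/(r)}$ along the row and column that correspond to the single part $(r)$ of the skew partition. Because $\mu = (r)$ has length one, the block $N(\lambda,(r))$ in the defining matrix $M(\lambda,(r))$ is a column vector whose entries are precisely $q_{\lambda_i - r}$, and these are the same coefficients that appear on the right-hand side of the desired formula. The Pfaffians of the complementary minors should then collapse to $Q_{\lambda\setminus\{\lambda_i\}}$, so the identity is essentially built into the definition.

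The proof splits by the parity of $n$, mirroring the case split of the definition of $Q_{\lambda/\mu}$. First I would take $n$ odd, so that $n + 1$ is even and $Q_{\lambda/(r)} = \Pf M(\lambda,(r))$, a matrix of size $n+1$. Applying the Laplace expansion from the preliminaries along row/column $i = n+1$, every entry $:m_{n+1,j}:$ with $j \le n$ equals $q_{\lambda_j - r}$, and the minor $M_{n+1,j}$ is exactly $M(\lambda\setminus\{\lambda_j\})$, whose Pfaffian is $Q_{\lambda\setminus\{\lambda_j\}}$ since $\lambda\setminus\{\lambda_j\}$ now has even length. Tracking the prefactor $(-1)^{(n+1)-1} = (-1)^n = -1$ yields exactly the claimed sum.

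For the $n$ even case, we instead have $Q_{\lambda/(r)} = \Pf M(\lambda 0,(r))$, a matrix of size $n+2$. The key simplification is that the last entry of $N(\lambda 0,(r))$ is $q_{0-r} = 0$ because $r \ge 1$, so expanding along row/column $i = n+2$ the term coming from $j = n+1$ drops out automatically. For $j \le n$ the minor is $M((\lambda\setminus\{\lambda_j\})0)$, which equals $Q_{\lambda\setminus\{\lambda_j\}}$ because $\lambda\setminus\{\lambda_j\}$ has odd length. The overall sign is $(-1)^{(n+2)-1} = (-1)^{n+1} = -1$, giving the same formula.

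The only real obstacle is bookkeeping: one must match the index conventions of the expansion formula $\Pf M = (-1)^{i-1}\sum_{j\neq i}(-1)^j :m_{ij}: \Pf M_{ij}$ with the indexing of $\lambda$, verify that the vanishing entry $q_{-r}=0$ really does kill the unwanted $j = n+1$ term in the even case, and confirm that Proposition~\ref{Q_lambda0} (the identity $Q_{\lambda0}=Q_\lambda$) is what legitimizes treating the minors $M((\lambda\setminus\{\lambda_j\})0)$ as $Q_{\lambda\setminus\{\lambda_j\}}$. Once these three points are verified, the two parity cases produce the same closed form and the lemma is established.
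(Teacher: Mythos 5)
Your proposal matches the paper's proof essentially verbatim: both split on the parity of $n$, expand $\Pf M(\lambda,(r))$ (resp.\ $\Pf M(\lambda0,(r))$) along the last row/column, use $q_{0-r}=0$ to kill the extra term in the even case, and identify the complementary minors with $Q_{\lambda\setminus\{\lambda_j\}}$. The only tiny quibble is that identifying $\Pf M((\lambda\setminus\{\lambda_j\})0)$ with $Q_{\lambda\setminus\{\lambda_j\}}$ is just the definition of $Q_\nu$ for odd-length $\nu$, not an appeal to Proposition~\ref{Q_lambda0}.
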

    
    \begin{proof}
        If $n$ is odd, then we have
        \begin{align*}
            Q_{\lambda/(r)}&=\Pf
                \begin{pNiceArray}{ccc|c}
                    \Block{3-3}<\fontsize{30}{50}>{M(\lambda)} &&& q_{\lambda_1-r}\\
                    &&& \vdots\\
                    &&& q_{\lambda_n-r}\\\hline
                    -q_{\lambda_1-r} & \cdots & -q_{\lambda_n-r} & 0
                \end{pNiceArray}\\
            &=(-1)^{n+1-1}\sum_{j=1}^n(-1)^jq_{\lambda_j-r}\Pf(M(\lambda\setminus\{\lambda_j\}))
        \end{align*}
        by taking the Laplace expansion along the last row/column. Then, note that $\Pf(M(\lambda\setminus\{\lambda_j\}))=Q_{\lambda\setminus\{\lambda_j\}}$.
        
        If $n$ is even, then we have
        \begin{align*}
            Q_{\lambda/(r)}&=\Pf
                \begin{pNiceArray}{cccc|c}
                    \Block{4-4}<\fontsize{30}{50}>{M(\lambda0)} &&&& q_{\lambda_1-r}\\
                    &&&& \vdots\\
                    &&&& q_{\lambda_n-r}\\
                    &&&& q_{0-r}\\\hline
                    -q_{\lambda_1-r} & \cdots & -q_{\lambda_n-r} & -q_{0-r} & 0
                \end{pNiceArray}\\
            &=(-1)^{n+2-1}\sum_{j=1}^n(-1)^jq_{\lambda_j-r}\Pf(M(\lambda0\setminus\{\lambda_j\}))
        \end{align*}
        by taking the Laplace expansion along the last row/column, since $q_{-r}=0$. Then, note that $\Pf(M(\lambda0\setminus\{\lambda_j\}))=Q_{\lambda\setminus\{\lambda_j\}}$.
    \end{proof}
    
    Note that our equation in Lemma \ref{Q_plam} for $Q_{p\lambda}$ is missing the term $q_pQ_\lambda$ when $\ell(\lambda)$ is odd. However, this is accounted for due to Proposition \ref{alternating sum identity}. So, we are able to prove the following identity, which completes the proof of Theorem \ref{vertex operator identity}.
    
    \begin{theorem}\label{Q_plam=q_pQ_lam+2...}
        Let $\lambda\in\Z^n$ be a partition, then for all $p\in\Z$ we have
        \[Q_{p\lambda}=q_pQ_\lambda+2\sum_{r\geq1}(-1)^rq_{p+r}Q_{\lambda/(r)}.\]
    \end{theorem}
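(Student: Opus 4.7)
My plan is to start from the right-hand side
\[
q_pQ_\lambda+2\sum_{r\geq1}(-1)^rq_{p+r}Q_{\lambda/(r)}
\]
and transform it into the expression for $Q_{p\lambda}$ supplied by Lemma~\ref{Q_plam}. The bridge between the two forms is the defining formula \eqref{Q_(r,s)}, which I will use in reverse to recognize a certain inner sum as a $Q_{(p,\lambda_i)}$.

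Concretely, I would first apply Lemma~\ref{Q_lam/(r)} to rewrite each $Q_{\lambda/(r)}$ as $-\sum_{i=1}^n(-1)^iq_{\lambda_i-r}Q_{\lambda\setminus\{\lambda_i\}}$, and then interchange the order of summation. Since $q_{\lambda_i-r}=0$ for $r>\lambda_i$, the inner sum over $r\ge 1$ is in fact a finite sum up to $r=\lambda_i$, so for each fixed $i$ the combination
\[
2\sum_{r\geq1}(-1)^rq_{p+r}q_{\lambda_i-r}
\]
is, by the defining formula \eqref{Q_(r,s)} applied with $s=\lambda_i$, exactly $Q_{(p,\lambda_i)}-q_pq_{\lambda_i}$. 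This is the main computational step, and the only place requiring care is being sure the truncation matches the sum in \eqref{Q_(r,s)}; once that is established, the rest is bookkeeping.

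After this substitution the RHS becomes
\[
q_pQ_\lambda-\sum_{i=1}^n(-1)^iQ_{(p,\lambda_i)}Q_{\lambda\setminus\{\lambda_i\}}+q_p\sum_{i=1}^n(-1)^iq_{\lambda_i}Q_{\lambda\setminus\{\lambda_i\}}.
\]
Now Proposition~\ref{alternating sum identity} handles the correction term in exactly the way needed to reconcile the two cases of Lemma~\ref{Q_plam}. When $n$ is even, the last sum vanishes, leaving $q_pQ_\lambda-\sum_i(-1)^iQ_{(p,\lambda_i)}Q_{\lambda\setminus\{\lambda_i\}}$, which is precisely the even-length formula for $Q_{p\lambda}$. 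When $n$ is odd, Proposition~\ref{alternating sum identity} gives $\sum_i(-1)^iq_{\lambda_i}Q_{\lambda\setminus\{\lambda_i\}}=-Q_\lambda$, so the $q_pQ_\lambda$ term is canceled and we are left with $-\sum_i(-1)^iQ_{(p,\lambda_i)}Q_{\lambda\setminus\{\lambda_i\}}$, which matches the odd-length formula in Lemma~\ref{Q_plam}.

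The main obstacle I anticipate is a notational one: the definition \eqref{Q_(r,s)} is only directly applicable when $s=\lambda_i\geq 0$, but since $\lambda$ is a partition this is automatic, and when $\lambda_i=0$ both sides of the crucial identification $2\sum_{r\geq 1}(-1)^rq_{p+r}q_{\lambda_i-r}=Q_{(p,\lambda_i)}-q_pq_{\lambda_i}$ collapse consistently to $0$. No new ideas beyond Lemmas~\ref{Q_plam}, \ref{Q_lam/(r)}, Proposition~\ref{alternating sum identity}, and the definition \eqref{Q_(r,s)} are required.
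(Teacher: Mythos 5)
Your proposal is correct and is essentially the paper's own argument run in reverse: the paper starts from Lemma~\ref{Q_plam}, expands each $Q_{(p,\lambda_i)}$ via \eqref{Q_(r,s)}, uses Proposition~\ref{alternating sum identity} to reconcile the parity of $n$, and recognizes $Q_{\lambda/(r)}$ via Lemma~\ref{Q_lam/(r)}, whereas you start from the right-hand side and perform the same substitutions in the opposite direction. The ingredients, the key identification $2\sum_{r\geq1}(-1)^rq_{p+r}q_{\lambda_i-r}=Q_{(p,\lambda_i)}-q_pq_{\lambda_i}$, and the role of Proposition~\ref{alternating sum identity} are identical.
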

    
    \begin{proof}
        First, suppose $n$ is odd. Then, we substitute the formula (\ref{Q_(r,s)}) for $Q_{(r,s)}$ into Lemma \ref{Q_plam} to get
        \begin{align*}
            Q_{p\lambda}&=-\sum_{i=1}^{\ell(\lambda)}(-1)^iQ_{(p,\lambda_i)}Q_{\lambda\setminus\{\lambda_i\}}\\
            &=-\sum_{i=1}^{\ell(\lambda)}(-1)^i\left(q_pq_{\lambda_i}+2\sum_{r\geq1}(-1)^rq_{p+r}q_{\lambda_i-r}\right)Q_{\lambda\setminus\{\lambda_i\}}.
        \end{align*}
        We expand out this sum to get that this is equal to 
        \begin{align*}
            -q_p\sum_{i=1}^{\ell(\lambda)}(-1)^iq_{\lambda_i}Q_{\lambda\setminus\{\lambda_i\}}-2\sum_{i=1}^{\ell(\lambda)}(-1)^i\sum_{r\geq1}(-1)^rq_{p+r}q_{\lambda_i-r}Q_{\lambda\setminus\{\lambda_i\}}.
        \end{align*}
        By Lemma \ref{alternating sum identity}, we see that the sum in the first term is $-Q_\lambda$, so we have
        \begin{align*}
            q_pQ_\lambda-2\sum_{i=1}^{\ell(\lambda)}(-1)^i\sum_{r\geq1}(-1)^rq_{p+r}q_{\lambda_i-r}Q_{\lambda\setminus\{\lambda_i\}}.
        \end{align*}
        Then, we rewrite the second two sums to get
        \begin{align*}
            q_pQ_\lambda+2\sum_{r\geq1}(-1)^rq_{p+r}\left(-\sum_{i=1}^{\ell(\lambda)}(-1)^iq_{\lambda_i-r}Q_{\lambda\setminus\{\lambda_i\}}\right).
        \end{align*}
        Finally, we can apply Lemma \ref{Q_lam/(r)} to the sum inside the parentheses, and so we have
        \begin{align*}
            Q_{p\lambda}=q_pQ_\lambda+2\sum_{r\geq1}(-1)^rq_{p+r}Q_{\lambda/(r)}.
        \end{align*}
        
        Next, suppose $n$ is even. We repeat the first steps of the odd case to get
        \[Q_{p\lambda}=q_pQ_\lambda-q_p\sum_{i=1}^{\ell(\lambda)}(-1)^iq_{\lambda_i}Q_{\lambda\setminus\{\lambda_i\}}-2\sum_{i=1}^{\ell(\lambda)}(-1)^i\sum_{r\geq1}(-1)^rq_{p+r}q_{\lambda_i-r}Q_{\lambda\setminus\{\lambda_i\}},\]
        where there is now an extra term $q_pQ_\lambda$. However, by Lemma \ref{alternating sum identity} we have that the first sum is $0$, and so we get
        \[q_pQ_\lambda-2\sum_{i=1}^{\ell(\lambda)}(-1)^i\sum_{r\geq1}(-1)^rq_{p+r}q_{\lambda_i-r}Q_{\lambda\setminus\{\lambda_i\}},\]
        which is the same as in the odd case. So, we repeat the final steps as the odd case to finish the proof.
    \end{proof}
    
    \begin{remark}
        We could have used Proposition \ref{Q_lambda0} to simplify several statements and proofs in this section without having to break into even and odd cases. However, we would have less information about the differences due to the parity of $\ell(\lambda)$.
    \end{remark}
    
    \section{Expanding $Q_\lambda$ Using Skew Functions}

    In this section, we will expand $Q_\lambda$ as a sum involving skew Schur's $Q$-functions of the form $Q_{\lambda/(i)}$ in two different ways. In the first method, $i$ will range over all nonnegative integers $r\geq0$. In the second method, $i$ will range over just the parts $\lambda_1,\lambda_2,\ldots$ of $\lambda$.
    
    \subsection{Sum of $Q_{\lambda/(r)}$}

    Many of our identities have involved the function $Q_{p\lambda}$. However, we may easily find identities for $Q_\lambda$ by setting $p=0$. First, we specialize the final result from last section.
    
    \begin{corollary}\label{alternating sum of skew}
        Let $\lambda$ be a partition, then
        \[Q_\lambda=\begin{cases}
            \dsp-\sum_{r\geq1}(-1)^rq_rQ_{\lambda/(r)}&\text{if }\ell(\lambda)\text{ is odd},\\
            \dsp\Neg\sum_{r\geq0}(-1)^rq_rQ_{\lambda/(r)}&\text{if }\ell(\lambda)\text{ is even}.
        \end{cases}\]
    \end{corollary}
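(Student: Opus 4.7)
The plan is to specialize Theorem \ref{Q_plam=q_pQ_lam+2...} at $p=0$, since the right-hand side already has almost the desired shape. Substituting and using $q_0=1$ immediately yields
\[Q_{0\lambda}=Q_\lambda+2\sum_{r\geq1}(-1)^rq_rQ_{\lambda/(r)}.\]
All that remains is to evaluate $Q_{0\lambda}$ in terms of $Q_\lambda$ and split the result by the parity of $\ell(\lambda)$.

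To handle $Q_{0\lambda}$, I would first use Proposition \ref{Q_lambda0} to trim trailing zeros and assume $\lambda=(\lambda_1,\ldots,\lambda_{\ell(\lambda)})$ with strictly positive parts. Then I apply Proposition \ref{B_iQ_lambda} repeatedly to drag the leading $0$ in $0\lambda$ rightward through each positive part; since $0+\lambda_i>0$ at every swap, only the sign-flip case of that proposition is triggered, so each swap contributes a factor of $-1$. After $\ell(\lambda)$ swaps we arrive at $\lambda0$, which equals $\lambda$ by Proposition \ref{Q_lambda0}, giving
\[Q_{0\lambda}=(-1)^{\ell(\lambda)}Q_\lambda.\]

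Plugging this back in gives $(-1)^{\ell(\lambda)}Q_\lambda=Q_\lambda+2\sum_{r\geq1}(-1)^rq_rQ_{\lambda/(r)}$. When $\ell(\lambda)$ is odd, the left side is $-Q_\lambda$ and rearrangement produces the first case. When $\ell(\lambda)$ is even, the equation collapses to $\sum_{r\geq1}(-1)^rq_rQ_{\lambda/(r)}=0$, and the second case follows by appending the $r=0$ summand, which equals $q_0Q_{\lambda/(0)}=Q_\lambda$ by the preceding corollary asserting $Q_{\lambda/0}=Q_\lambda$.

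The only delicate point is the identification $Q_{0\lambda}=(-1)^{\ell(\lambda)}Q_\lambda$, because Proposition \ref{B_iQ_lambda} branches into four cases and one must confirm that only the pure sign-change case is relevant. This is essentially bookkeeping once the reduction to positive parts is made, so I do not anticipate any genuine obstacle beyond being careful with that reduction.
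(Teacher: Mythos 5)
Your proposal is correct and follows essentially the same route as the paper: specialize Theorem \ref{Q_plam=q_pQ_lam+2...} at $p=0$, use Proposition \ref{B_iQ_lambda} to show $Q_{0\lambda}=(-1)^{\ell(\lambda)}Q_{\lambda0}=(-1)^{\ell(\lambda)}Q_\lambda$, and then split by parity, absorbing the $r=0$ term via $Q_{\lambda/0}=Q_\lambda$. Your extra care in trimming trailing zeros before commuting the leading $0$ is a harmless refinement of the paper's identical argument.
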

    
    \begin{proof}
        Let $p=0$ in Theorem \ref{Q_plam=q_pQ_lam+2...}, then we have
        \[Q_{0\lambda}=Q_\lambda+2\sum_{r\geq1}(-1)^rq_rQ_{\lambda/(r)}.\]
        Repeatedly applying Proposition \ref{B_iQ_lambda} to $Q_{0\lambda}$, we see that we get
        \[(-1)^{\ell(\lambda)}Q_{\lambda0}=Q_\lambda+2\sum_{r\geq1}(-1)^rq_rQ_{\lambda/(r)}.\]
        Then, since $Q_{\lambda0}=Q_\lambda$ we have
        \[-\sum_{r\geq1}(-1)^rq_rQ_{\lambda/(r)}=
            \begin{cases}
                Q_\lambda&\text{if }\ell(\lambda)\text{ is odd},\\
                0&\text{if }\ell(\lambda)\text{ is even}.
            \end{cases}\]
        Subtracting $q_0Q_{\lambda/(0)}=Q_\lambda$ from these equations, we get
        \[\sum_{r\geq0}(-1)^rq_rQ_{\lambda/(r)}=
            \begin{cases}
                0&\text{if }\ell(\lambda)\text{ is odd},\\
                Q_\lambda&\text{if }\ell(\lambda)\text{ is even}.
            \end{cases}\]
    \end{proof}
    
    \subsection{Sum of $Q_{\lambda/(\lambda_k)}$}

    Now, we will write a sum ranging over the parts of $\lambda$. In order to find this sum, we will need to write a similar decomposition of Schur's $Q$-functions of the form $Q_{\lambda\setminus\{\lambda_k\}}$. Then, we may substitute the decomposition into Lemma \ref{Q_plam} to get our decomposition.
    
    
    
    \begin{lemma}\label{Q_lam/lam_k in terms of skew Schur}
        Let $\lambda$ be a partition and fix an integer $k$ such that $1\leq k\leq\ell(\lambda)$, then
        \[Q_{\lambda\setminus\{\lambda_k\}}=-\sum_{i=1}^k(-1)^ir_{i,k}^\lambda Q_{\lambda/(\lambda_i)}\]
        where $r_{i,k}^\lambda$ is a polynomial in the variables $q_1,q_2,\ldots,q_{\lambda_i-\lambda_k}$. Specifically, we have $r_{i,i}^\lambda=1$, and recursively define (for $k>i$)
        \[r_{i,k}^\lambda=-\sum_{j=i}^{k-1}(-1)^{j-k}q_{\lambda_j-\lambda_k}r_{i,j}^\lambda.\]
    \end{lemma}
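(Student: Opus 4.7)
The plan is to prove the identity by induction on $k$, with Lemma~\ref{Q_lam/(r)} (the Józefiak-Pragacz expansion) applied at $r = \lambda_k$ as the workhorse. That lemma gives
\[Q_{\lambda/(\lambda_k)} = -\sum_{i=1}^n (-1)^i q_{\lambda_i - \lambda_k} Q_{\lambda\setminus\{\lambda_i\}}.\]
Since $\lambda$ is a strict partition, $\lambda_i < \lambda_k$ for $i > k$, so $q_{\lambda_i - \lambda_k} = 0$ and the sum truncates to $i \leq k$. The $i = k$ term uses $q_0 = 1$ and contributes $-(-1)^k Q_{\lambda\setminus\{\lambda_k\}}$, which lets me solve for $Q_{\lambda\setminus\{\lambda_k\}}$ in terms of $Q_{\lambda/(\lambda_k)}$ and the strictly lower-index terms $Q_{\lambda\setminus\{\lambda_i\}}$ with $i < k$.

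The base case $k = 1$ is then immediate: only the $i = 1$ term survives, giving $Q_{\lambda\setminus\{\lambda_1\}} = Q_{\lambda/(\lambda_1)}$, which matches the claim with $r_{1,1}^\lambda = 1$. For the inductive step, solving the truncated equation produces
\[Q_{\lambda\setminus\{\lambda_k\}} = (-1)^{k+1} Q_{\lambda/(\lambda_k)} + \sum_{i=1}^{k-1} (-1)^{i+k+1} q_{\lambda_i - \lambda_k} Q_{\lambda\setminus\{\lambda_i\}}.\]
I would then substitute the inductive hypothesis $Q_{\lambda\setminus\{\lambda_i\}} = -\sum_{j=1}^i (-1)^j r_{j,i}^\lambda Q_{\lambda/(\lambda_j)}$ into each term of this sum, and swap the order of summation so that $j$ runs over $1, \ldots, k-1$ and $i$ over $j, \ldots, k-1$. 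The coefficient of $Q_{\lambda/(\lambda_j)}$ that emerges equals $-(-1)^j \bigl(-\sum_{i=j}^{k-1} (-1)^{i-k} q_{\lambda_i - \lambda_k} r_{j,i}^\lambda\bigr) = -(-1)^j r_{j,k}^\lambda$ by the recursive definition, while the leading term $(-1)^{k+1} Q_{\lambda/(\lambda_k)}$ supplies the $j = k$ contribution since $r_{k,k}^\lambda = 1$.

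The main obstacle is the sign bookkeeping, in particular verifying that $(-1)^{i+k+1} = -(-1)^{i-k}$ aligns correctly with the sign appearing in the recursion that defines $r_{j,k}^\lambda$. The claimed variable range, namely that $r_{j,k}^\lambda$ is a polynomial in $q_1, \ldots, q_{\lambda_j - \lambda_k}$, then follows by a parallel induction: each $r_{j,i}^\lambda$ in the recursion involves only $q_1, \ldots, q_{\lambda_j - \lambda_i}$ with $\lambda_j - \lambda_i \leq \lambda_j - \lambda_k$ (since $i \leq k$ forces $\lambda_i \geq \lambda_k$), and the appended factor $q_{\lambda_i - \lambda_k}$ has subscript at most $\lambda_j - \lambda_k$ (since $i \geq j$ forces $\lambda_i \leq \lambda_j$).
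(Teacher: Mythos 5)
Your proposal is correct and follows essentially the same route as the paper's proof: specialize Lemma~\ref{Q_lam/(r)} at $r=\lambda_k$, truncate the sum, solve for $Q_{\lambda\setminus\{\lambda_k\}}$, and induct on $k$ with a swap of summation order that matches the recursion defining $r_{j,k}^\lambda$; your sign bookkeeping checks out. The only (shared) caveat is that the truncation step needs $\lambda_i<\lambda_k$ for $i>k$, which you justify by assuming strictness while the lemma is stated for a general partition --- the paper's own proof makes the same implicit assumption.
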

    
    \begin{proof}
        Recall from Lemma \ref{Q_lam/(r)} that we have
        \[Q_{\lambda/(r)}=-\sum_{i=1}^n(-1)^iq_{\lambda_i-r}Q_{\lambda\setminus\{\lambda_i\}}.\]
        First, note that if $r=\lambda_k$, then the sum only needs to go from $1$ to $k$ since $q_{\lambda_i-r}=0$ for $\lambda_i<r=\lambda_k$. Additionally, we have that $q_{\lambda_k-\lambda_k}=q_0=1$, so we have
        \[-Q_{\lambda/(\lambda_k)}=(-1)^kQ_{\lambda\setminus\{\lambda_k\}}+\sum_{i=1}^{k-1}(-1)^iq_{\lambda_i-\lambda_k}Q_{\lambda\setminus\{\lambda_i\}}.\]
        Then, we can solve for $Q_{\lambda\setminus\{\lambda_k\}}$, and we get
        \begin{equation}\label{Q_lam/lam_k}
            Q_{\lambda\setminus\{\lambda_k\}}=(-1)^{1-k}Q_{\lambda/(\lambda_k)}-\sum_{i=1}^{k-1}(-1)^{i-k}q_{\lambda_i-\lambda_k}Q_{\lambda\setminus\{\lambda_i\}}.
        \end{equation}
        
        Now, we can use this identity to prove the Lemma. We proceed by induction on $k$. First, if $k=1$, then from (\ref{Q_lam/lam_k}) we see that
        \begin{align*}
            Q_{\lambda\setminus\{\lambda_1\}}&=(-1)^{1-1}Q_{\lambda/(\lambda_1)}\\
            &=-\left((-1)^1r_{1,1}^\lambda Q_{\lambda/(\lambda_1)}\right)
        \end{align*}
        since $r_{1,1}^\lambda=1$. Suppose the Lemma is true for $Q_{\lambda\setminus\{\lambda_k\}}$, then by (\ref{Q_lam/lam_k}) we have
        \begin{align*}
            Q_{\lambda\setminus\{\lambda_{k+1}\}}&=(-1)^{1-(k+1)}Q_{\lambda/(\lambda_{k+1})}-\sum_{i=1}^{k}(-1)^{i-(k+1)}q_{\lambda_i-\lambda_{k+1}}Q_{\lambda\setminus\{\lambda_i\}}\\
            &=(-1)^{k}Q_{\lambda/(\lambda_{k+1})}-\sum_{i=1}^{k}(-1)^{i-(k+1)}q_{\lambda_i-\lambda_{k+1}}\left(-\sum_{j=1}^i(-1)^jr_{j,i}^\lambda Q_{\lambda/(\lambda_j)}\right)
        \end{align*}
        since $i\leq k$. We can rewrite this sum to get
        \[(-1)^{k}Q_{\lambda/(\lambda_{k+1})}+\sum_{j=1}^k(-1)^j\left(\sum_{i=j}^k(-1)^{i-(k+1)}q_{\lambda_i-\lambda_{k+1}}r_{j,i}^\lambda \right)Q_{\lambda/(\lambda_j)},\]
        and so by the definition of $r_{i,k}^\lambda$ we have
        \begin{align*}
            &-(-1)^{k+1}r_{k+1,k+1}^\lambda Q_{\lambda/(\lambda_{k+1})}-\sum_{j=1}^k(-1)^jr_{j,k+1}^\lambda Q_{\lambda/(\lambda_j)}\\
            &\qquad=-\sum_{j=1}^{k+1}(-1)^jr_{j,k+1}^\lambda Q_{\lambda/(\lambda_j)}.
        \end{align*}
    \end{proof}

    Now, we can use this to get a decomposition of $Q_{p\lambda}$.
    
    \begin{proposition}\label{Q_plam in terms of skew Schur}
        Let $\lambda$ be a partition and $p\in\Z$, then
        \[Q_{p\lambda}=
            \begin{cases}
             \phantom{ q_pQ_\lambda+}\,\dsp\sum_{k=1}^{\ell(\lambda)}\left(\sum_{i=k}^{\ell(\lambda)}(-1)^{i+k}Q_{(p,\lambda_i)}r_{k,i}^\lambda\right)Q_{\lambda/(\lambda_k)}&\text{if }\ell(\lambda)\text{ is odd},\\
             q_pQ_\lambda+\dsp\sum_{k=1}^{\ell(\lambda)}\left(\sum_{i=k}^{\ell(\lambda)}(-1)^{i+k}Q_{(p,\lambda_i)}r_{k,i}^\lambda\right)Q_{\lambda/(\lambda_k)}&\text{if }\ell(\lambda)\text{ is even}.
            \end{cases}		
        \]
    \end{proposition}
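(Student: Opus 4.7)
The strategy is to combine the two decomposition lemmas from earlier in this section. Specifically, Lemma \ref{Q_plam} already expresses $Q_{p\lambda}$ as a linear combination of $Q_{\lambda\setminus\{\lambda_i\}}$, and Lemma \ref{Q_lam/lam_k in terms of skew Schur} expresses each $Q_{\lambda\setminus\{\lambda_i\}}$ as a linear combination of $Q_{\lambda/(\lambda_j)}$ with coefficients involving the $r_{j,i}^\lambda$. So the plan is simply to substitute one into the other and interchange the order of summation.

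In more detail, I would first use Proposition \ref{Q_lambda0} to drop any trailing zeros from $\lambda$, so that $n = \ell(\lambda)$ and the sum indices in the two lemmas line up. Then I would start from Lemma \ref{Q_plam}, which in both parity cases takes the form
\[Q_{p\lambda} = \varepsilon_n\, q_pQ_\lambda - \sum_{i=1}^{\ell(\lambda)}(-1)^i Q_{(p,\lambda_i)} Q_{\lambda\setminus\{\lambda_i\}},\]
where $\varepsilon_n=0$ if $\ell(\lambda)$ is odd and $\varepsilon_n=1$ if $\ell(\lambda)$ is even. Substituting the formula from Lemma \ref{Q_lam/lam_k in terms of skew Schur}, namely $Q_{\lambda\setminus\{\lambda_i\}} = -\sum_{k=1}^i (-1)^k r_{k,i}^\lambda Q_{\lambda/(\lambda_k)}$, gives
\[Q_{p\lambda} = \varepsilon_n\, q_pQ_\lambda + \sum_{i=1}^{\ell(\lambda)} \sum_{k=1}^{i} (-1)^{i+k} Q_{(p,\lambda_i)}\, r_{k,i}^\lambda\, Q_{\lambda/(\lambda_k)}.\]

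The final step is to swap the order of summation: for fixed $k$, the inner index $i$ ranges from $k$ to $\ell(\lambda)$, yielding
\[Q_{p\lambda} = \varepsilon_n\, q_pQ_\lambda + \sum_{k=1}^{\ell(\lambda)} \left( \sum_{i=k}^{\ell(\lambda)} (-1)^{i+k} Q_{(p,\lambda_i)}\, r_{k,i}^\lambda \right) Q_{\lambda/(\lambda_k)},\]
which is exactly the claimed formula in both parity cases.

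This proof is essentially a mechanical splicing of two prior results, so there is no real obstacle beyond bookkeeping. The only minor subtlety is ensuring that the upper summation limit $n$ in Lemma \ref{Q_plam} can be safely replaced by $\ell(\lambda)$; this is handled by first normalizing $\lambda$ via Proposition \ref{Q_lambda0} so that no trailing zeros remain, after which $n = \ell(\lambda)$ and the formulas of the two lemmas apply verbatim with compatible index ranges.
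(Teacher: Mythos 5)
Your proposal is correct and follows essentially the same route as the paper's own proof: substitute the expansion of $Q_{\lambda\setminus\{\lambda_i\}}$ from Lemma \ref{Q_lam/lam_k in terms of skew Schur} into Lemma \ref{Q_plam} and interchange the order of summation. Your added remark about normalizing away trailing zeros so that $n=\ell(\lambda)$ is a reasonable bookkeeping clarification but does not change the argument.
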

    
    \begin{proof}
        Recall from Lemma \ref{Q_plam} that we have
        \[Q_{p\lambda}=
            \begin{cases}
                \phantom{q_pQ_\lambda}-\dsp\sum_{i=1}^n(-1)^iQ_{(p,\lambda_i)}Q_{\lambda\setminus\{\lambda_i\}} & \text{if }n\text{ is odd},\\
                q_pQ_\lambda-\dsp\sum_{i=1}^n(-1)^iQ_{(p,\lambda_i)}Q_{\lambda\setminus\{\lambda_i\}} & \text{if }n\text{ is even}.
            \end{cases}\]
        From Lemma \ref{Q_lam/lam_k in terms of skew Schur} we can replace $Q_{\lambda\setminus\{\lambda_i\}}$ so that the sum becomes
        \[-\sum_{i=1}^{\ell(\lambda)}(-1)^{i}Q_{(p,\lambda_i)}Q_{\lambda\setminus\{\lambda_i\}}=\sum_{i=1}^{\ell(\lambda)}(-1)^{i}Q_{(p,\lambda_i)}\sum_{j=1}^i(-1)^jr_{j,i}^\lambda Q_{\lambda/(\lambda_j)}.\]
        Finally, we can swap the sums to get
        \[=\sum_{j=1}^{\ell(\lambda)}\left(\sum_{i=j}^{\ell(\lambda)}(-1)^{i+j}Q_{(p,\lambda_i)}r_{j,i}^\lambda\right)Q_{\lambda/(\lambda_j)}.\]
    \end{proof}
    
    Like before, we are able to set $p=0$ to get an analogous statement about $Q_\lambda$ as a sum of skew functions.
    
    \begin{corollary}\label{alternating sum of skew2}
        Let $\lambda$ be a partition, then
        \[Q_{\lambda}=
            \begin{cases}
             \dsp-\sum_{k=1}^{\ell(\lambda)}(-1)^k a_k^\lambda Q_{\lambda/(\lambda_k)}&\text{if }\ell(\lambda)\text{ is odd},\\
             \dsp\Neg\sum_{k=0}^{\ell(\lambda)}(-1)^k a_k^\lambda Q_{\lambda/(\lambda_k)}&\text{if }\ell(\lambda)\text{ is even},
            \end{cases}\]
        where we set $\lambda_0:=0$, and we define the coefficients
        \[a_k^\lambda:=
            \begin{cases}
                \dsp-\sum_{i=k}^{\ell(\lambda)}(-1)^iq_{\lambda_i}r_{k,i}^\lambda&1\leq k\leq\ell(\lambda),\\
                1&k=0,
            \end{cases}\]
        where $r_{k,i}^\lambda$ are defined as in Lemma \ref{Q_lam/lam_k in terms of skew Schur}.
    \end{corollary}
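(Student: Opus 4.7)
The plan is to mimic the strategy used in Corollary 4.6: set $p=0$ in the decomposition of $Q_{p\lambda}$ furnished by Proposition \ref{Q_plam in terms of skew Schur}, simplify the coefficients $Q_{(0,\lambda_i)}$, and then rewrite $Q_{0\lambda}$ in terms of $Q_\lambda$ using Propositions \ref{Q_lambda0} and \ref{B_iQ_lambda}.

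First I would substitute $p=0$ into Proposition \ref{Q_plam in terms of skew Schur}. Because $i$ ranges over $1\le i\le\ell(\lambda)$, each $\lambda_i$ is strictly positive, so the antisymmetry in (\ref{Q_(r,s)}) gives $Q_{(0,\lambda_i)}=-Q_{(\lambda_i,0)}=-q_{\lambda_i}$. Pulling the minus sign and a factor of $(-1)^k$ out of the inner sum converts the coefficient of $Q_{\lambda/(\lambda_k)}$ into exactly $(-1)^k a_k^\lambda$ as defined in the corollary. This yields
\[Q_{0\lambda}=\varepsilon\, q_0 Q_\lambda+\sum_{k=1}^{\ell(\lambda)}(-1)^k a_k^\lambda Q_{\lambda/(\lambda_k)},\]
where $\varepsilon=0$ when $\ell(\lambda)$ is odd and $\varepsilon=1$ when $\ell(\lambda)$ is even.

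Next I would evaluate $Q_{0\lambda}$ directly. By repeated application of Proposition \ref{B_iQ_lambda} (using the second case, since each swap moves $0$ past a positive part $\lambda_j>0$, so $0+\lambda_j\neq 0$), each transposition contributes a factor of $-1$, giving $Q_{0\lambda}=(-1)^{\ell(\lambda)}Q_{\lambda 0}$. Applying Proposition \ref{Q_lambda0} then yields $Q_{0\lambda}=(-1)^{\ell(\lambda)}Q_\lambda$. Substituting this on the left-hand side and solving for $Q_\lambda$ settles the odd case immediately, producing $Q_\lambda=-\sum_{k=1}^{\ell(\lambda)}(-1)^k a_k^\lambda Q_{\lambda/(\lambda_k)}$.

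For the even case the relation $Q_{0\lambda}=Q_\lambda$ causes the two $Q_\lambda$ terms to cancel, leaving $0=\sum_{k=1}^{\ell(\lambda)}(-1)^k a_k^\lambda Q_{\lambda/(\lambda_k)}$. To recover the stated formula, I would add $Q_\lambda$ to both sides and recognize the added term as the $k=0$ summand: with the conventions $\lambda_0:=0$ and $a_0^\lambda:=1$, Corollary 3.4 gives $Q_{\lambda/(\lambda_0)}=Q_{\lambda/0}=Q_\lambda$, so $(-1)^0 a_0^\lambda Q_{\lambda/(\lambda_0)}=Q_\lambda$. Absorbing this into the sum produces the desired identity $Q_\lambda=\sum_{k=0}^{\ell(\lambda)}(-1)^k a_k^\lambda Q_{\lambda/(\lambda_k)}$. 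There is no real obstacle here: the only delicate point is the sign bookkeeping in the parity factor $(-1)^{\ell(\lambda)}$ coming from $B_i$, which is what forces the two cases and, in particular, which accounts for the presence or absence of the $k=0$ term.
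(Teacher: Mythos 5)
Your proposal is correct and follows essentially the same route as the paper: set $p=0$ in Proposition \ref{Q_plam in terms of skew Schur}, use $Q_{(0,\lambda_i)}=-q_{\lambda_i}$ to identify the coefficient of $Q_{\lambda/(\lambda_k)}$ as $(-1)^k a_k^\lambda$, evaluate $Q_{0\lambda}=(-1)^{\ell(\lambda)}Q_\lambda$ via Propositions \ref{B_iQ_lambda} and \ref{Q_lambda0}, and absorb $Q_\lambda=a_0^\lambda Q_{\lambda/(\lambda_0)}$ as the $k=0$ term in the even case. The sign bookkeeping checks out throughout.
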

    
    \begin{proof}
        Let $p=0$ in Lemma \ref{Q_plam in terms of skew Schur}, then we have
        \[Q_{0\lambda}=
            \begin{cases}
             \phantom{q_0Q_\lambda+}\,\dsp\sum_{k=1}^{\ell(\lambda)}\left(\sum_{i=k}^{\ell(\lambda)}(-1)^{i+k}(-q_{\lambda_i})r_{k,i}^\lambda\right)Q_{\lambda/(\lambda_k)}&\text{if }\ell(\lambda)\text{ is odd},\\
             q_0Q_\lambda+\dsp\sum_{k=1}^{\ell(\lambda)}\left(\sum_{i=k}^{\ell(\lambda)}(-1)^{i+k}(-q_{\lambda_i})r_{k,i}^\lambda\right)Q_{\lambda/(\lambda_k)}&\text{if }\ell(\lambda)\text{ is even}.
            \end{cases}		
        \]
        Then, repeatedly apply Proposition \ref{B_iQ_lambda} to get $Q_{0\lambda}=(-1)^{\ell(\lambda)}Q_\lambda$, and so we have
        \[\sum_{k=1}^{\ell(\lambda)}\left(\sum_{i=k}^{\ell(\lambda)}(-1)^{i+k}q_{\lambda_i}r_{k,i}^\lambda\right)Q_{\lambda/(\lambda_k)}=
            \begin{cases}
                Q_\lambda &\text{if }\ell(\lambda)\text{ is odd},\\
                0 &\text{if }\ell(\lambda)\text{ is even}.
            \end{cases}
        \]
        Equivalently, this is
        \[-\sum_{k=1}^{\ell(\lambda)}(-1)^ka_k^\lambda Q_{\lambda/(\lambda_k)}=
            \begin{cases}
                Q_\lambda &\text{if }\ell(\lambda)\text{ is odd},\\
                0 &\text{if }\ell(\lambda)\text{ is even}.
            \end{cases}
        \]
        Subtracting $a_0^\lambda Q_{\lambda/(0)}=Q_\lambda$, we have
        \[\sum_{k=0}^{\ell(\lambda)}(-1)^ka_k^\lambda Q_{\lambda/(\lambda_k)}=
            \begin{cases}
                0 &\text{if }\ell(\lambda)\text{ is odd},\\
                Q_\lambda &\text{if }\ell(\lambda)\text{ is even}.
            \end{cases}
        \]
    \end{proof}
    
    \subsection{Specializing $r_{i,k}^\lambda$ and $a_k^\lambda$ to Staircase Partitions}

    As we have seen, it is useful to work with sums of Schur's $Q$-functions of the form $Q_{\lambda\setminus\{\lambda_k\}}$. So, it may be useful to study the decompositions of the functions $Q_{\lambda\setminus\{\lambda_k\}}$, and in particular the coefficients $r_{i,k}^\lambda$ and $a_k^\lambda$ from Lemma \ref{Q_lam/lam_k in terms of skew Schur} and Corollary \ref{alternating sum of skew2}. First, we find simple formulas for the coefficients in the case where $\lambda$ is a staircase partition
    \[\delta(n):=(n-1,n-2,\ldots,2,1)\in\Z^{n-1}\]
    for $n\geq1$. To start, we see that $r_{i,k}^{\delta(n)}$ depends only on the difference $k-i$.
    
    
    \begin{lemma}
        Fix $i\leq k<n$, then for all $j$ such that $-i<j<n-k$, we have
        \[r_{i,k}^{\delta(n)}=r_{i+j,k+j}^{\delta(n)}.\]
        In particular,
        \[r_{i,k}^{\delta(n)}=r_{1,k-i+1}^{\delta(n)}.\]
    \end{lemma}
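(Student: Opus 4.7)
The plan is to induct on the gap $k-i$, exploiting the fact that for the staircase partition the differences between parts depend only on index differences. Specifically, $\delta(n)_\ell - \delta(n)_k = (n-\ell) - (n-k) = k - \ell$, so substituting $\lambda = \delta(n)$ into the recursion from Lemma \ref{Q_lam/lam_k in terms of skew Schur} gives the translation-invariant recursion
\[r_{i,k}^{\delta(n)} = -\sum_{\ell=i}^{k-1}(-1)^{\ell-k}q_{k-\ell}r_{i,\ell}^{\delta(n)}.\]
Crucially, the coefficient $q_{k-\ell}$ depends only on the index gap.

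First I would dispose of the base case $k=i$, where both sides equal $1$ by definition. For the inductive step, assume the claim holds whenever the gap is strictly less than $k-i$. Applying the recursion to the shifted pair $(i+j, k+j)$ and reindexing by $\ell = i+j+m$ with $0 \leq m \leq k-i-1$ yields
\[r_{i+j,k+j}^{\delta(n)} = -\sum_{m=0}^{k-i-1}(-1)^{m-(k-i)}q_{k-i-m}r_{i+j,i+j+m}^{\delta(n)}.\]
Each inner term $r_{i+j, i+j+m}^{\delta(n)}$ has index gap $m < k-i$, so by induction it equals $r_{i, i+m}^{\delta(n)}$ provided the shift $j$ still lies in the range permitted for the pair $(i, i+m)$, namely $-i < j < n - (i+m)$. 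Since $m \leq k-i-1$, we have $n - (i+m) \geq n-k+1 > n-k$, so the hypothesis $-i < j < n-k$ is more restrictive and the induction hypothesis applies. Substituting gives exactly the recursion expression for $r_{i,k}^{\delta(n)}$, completing the induction.

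For the particular statement, I would set $j = 1-i$, yielding $r_{i,k}^{\delta(n)} = r_{1, k-i+1}^{\delta(n)}$. The range requirement $-i < 1-i < n-k$ needs checking: the lower bound is immediate, and the upper bound $1-i < n-k$ is equivalent to $k-i < n-1$, which holds since $i \geq 1$ and $k \leq n-1$.

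The only delicate point is bookkeeping the range constraint on $j$ across the induction, but as shown above, shrinking the gap only loosens the constraint, so there is no real obstacle.
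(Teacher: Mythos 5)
Your proof is correct and rests on the same observation as the paper's: the recursion defining $r_{i,k}^{\lambda}$ involves the parts only through the differences $\lambda_\ell-\lambda_k$, which for the staircase depend only on the index gap. The paper states this in one sentence, whereas you carry out the induction on $k-i$ explicitly (including the range bookkeeping for $j$), which is a faithful and complete elaboration of the same argument.
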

    
    \begin{proof}
        This is clear from the definitions of $r_{i,k}^\lambda$ and $\delta(n)$ since $r_{i,k}^\lambda$ depends on the differences between the parts $\lambda_i,\ldots,\lambda_k$.
    \end{proof}

    So, it suffices to find a formula for $r_{1,k}^{\delta(n)}$.
    
    \begin{lemma}
        For all $k<n$, we have
        \[r_{1,k}^{\delta(n)}=q_{k-1}.\]
    \end{lemma}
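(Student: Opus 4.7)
The plan is to prove this by induction on $k$, using the recursion for $r_{i,k}^\lambda$ from Lemma 4.6 specialized to $\lambda = \delta(n)$. For $\lambda_j = n - j$, the difference $\lambda_j - \lambda_k$ appearing in the recursion becomes simply $k - j$, so the recursion reads
\[r_{1,k}^{\delta(n)} = -\sum_{j=1}^{k-1}(-1)^{j-k} q_{k-j}\, r_{1,j}^{\delta(n)}.\]
The base case $k=1$ is immediate since $r_{1,1}^{\delta(n)} = 1 = q_0$.

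For the inductive step, I would assume $r_{1,j}^{\delta(n)} = q_{j-1}$ for all $1 \le j < k$ and substitute this into the display above. Pulling out the sign $(-1)^{-k} = (-1)^k$ and reindexing by $i = j-1$, this becomes
\[r_{1,k}^{\delta(n)} = (-1)^k \sum_{i=0}^{k-2} (-1)^i q_{k-1-i}\, q_i.\]
So the task reduces to showing this sum equals $(-1)^k q_{k-1}$, equivalently that $\sum_{i=0}^{k-2}(-1)^i q_{k-1-i} q_i = (-1)^k q_{k-1}$.

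The heart of the argument is the standard convolution identity $\sum_{i=0}^{k-1} (-1)^i q_i q_{k-1-i} = 0$ for $k \ge 2$. I would derive this from the generating-function identity $\kappa_z \cdot \kappa_{-z} = 1$ (which is immediate from the product formula defining $\kappa_z$), or alternatively from $Q_{(0,k-1)} = -q_{k-1}$ via formula (2.1). Subtracting the $i = k-1$ term $(-1)^{k-1} q_{k-1} q_0 = -(-1)^k q_{k-1}$ from the vanishing sum yields exactly the required identity, completing the induction.

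The main (mild) obstacle is identifying the correct convolution identity and matching indices/signs carefully; no deep new input is needed beyond (2.1) and the recursive definition of $r_{1,k}^{\delta(n)}$.
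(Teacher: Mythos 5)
Your proof is correct and follows essentially the same route as the paper: induction on $k$ with base case $r_{1,1}^{\delta(n)}=1=q_0$, substitution of the inductive hypothesis into the recursion $r_{1,k}^{\delta(n)}=-\sum_{j=1}^{k-1}(-1)^{j-k}q_{k-j}r_{1,j}^{\delta(n)}$, and reduction to the standard convolution identity $\sum_{i=0}^{m}(-1)^iq_iq_{m-i}=0$ for $m\geq1$ (which the paper simply cites from Macdonald). The sign bookkeeping and the extraction of the $i=k-1$ term match the paper's computation exactly.
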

    
    \begin{proof}
        We proceed by induction on $k$. First, we see that
        \[r_{1,1}^{\delta(n)}=1=q_0=q_{1-1}.\]
        Next, we have
        \[r_{1,k}^{\delta(n)}=-\sum_{j=1}^{k-1}(-1)^{j-k}q_{k-j}r_{1,j}^{\delta(n)}.\]
        By the induction hypothesis, we have that this is
        \[-\sum_{j=1}^{k-1}(-1)^{j-k}q_{k-j}q_{j-1}.\]
        We can rewrite this sum as
        \[(-1)^k\left(\left(\sum_{r+s=k-1}(-1)^rq_sq_r\right)-(-1)^{k-1}q_0q_{k-1}\right).\]
        It is well-known \cite[p.~251]{macdonald:1995} that this sum is $0$ for $k\geq2$, hence we are left with $q_{k-1}$.
    \end{proof}
    
    \begin{proposition}\label{r_ik^del}
    For all $i\leq k<n$, we have $r_{i,k}^{\delta(n)}=q_{k-i}$.
    \end{proposition}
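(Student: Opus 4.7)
The plan is to observe that this proposition is an immediate corollary of the two lemmas immediately preceding it, so no new ideas are required; I would simply chain them together.

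First, I would invoke the translation-invariance lemma to reduce the general index pair $(i,k)$ to the anchored pair $(1, k-i+1)$. Concretely, since $i \leq k < n$, I can take $j = 1-i$ (so that $-i < j < n-k$ holds whenever $i \geq 1$ and $k < n$) and conclude
\[
r_{i,k}^{\delta(n)} = r_{1, k-i+1}^{\delta(n)}.
\]
This step merely asserts that the recursion defining $r_{i,k}^\lambda$ depends only on the consecutive differences $\lambda_i - \lambda_{i+1}, \lambda_{i+1} - \lambda_{i+2}, \ldots$, which for $\delta(n)$ are all equal to $1$, so shifting the window does not change the value.

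Second, I would apply the lemma $r_{1,m}^{\delta(n)} = q_{m-1}$ to the case $m = k-i+1$, giving
\[
r_{1, k-i+1}^{\delta(n)} = q_{(k-i+1) - 1} = q_{k-i}.
\]
Combining the two displayed equations yields $r_{i,k}^{\delta(n)} = q_{k-i}$, as claimed.

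There is no real obstacle here — the previous two lemmas are exactly the content needed, and the only bookkeeping is to verify that the range condition $-i < j < n-k$ in the shift lemma is compatible with the choice $j = 1 - i$, which it is precisely under the hypothesis $1 \leq i \leq k < n$. The proof therefore consists of one invocation of each lemma and an arithmetic simplification of the subscript.
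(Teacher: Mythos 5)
Your proof is correct and matches the paper's, which simply states that the proposition is an immediate consequence of the two preceding lemmas (translation invariance reducing $r_{i,k}^{\delta(n)}$ to $r_{1,k-i+1}^{\delta(n)}$, followed by the formula $r_{1,m}^{\delta(n)}=q_{m-1}$). Your additional verification that the shift $j=1-i$ satisfies the range condition $-i<j<n-k$ is a correct and slightly more careful rendering of the same argument.
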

    
    \begin{proof}
        This is an immediate consequence of the previous two Lemmas.
    \end{proof}
    

    Now, we can use our formula for $r_{i,k}^{\delta(n)}$ to get a formula for $a_k^{\delta(n)}$.
    
    \begin{proposition}
        \[a_k^{\delta(n)}:=
            \begin{cases}
                (-1)^nq_{n-k}&1\leq k\leq n-1,\\
                1&k=0.
            \end{cases}\]
    \end{proposition}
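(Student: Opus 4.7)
The plan is to substitute the formula $r_{k,i}^{\delta(n)} = q_{i-k}$ from Proposition \ref{r_ik^del} directly into the defining sum for $a_k^{\delta(n)}$, and then reduce the resulting convolution to a single term using the same Newton-type identity that was already invoked in the proof of the formula for $r_{1,k}^{\delta(n)}$ (namely, $\sum_{r+s=m}(-1)^rq_rq_s=0$ for $m\geq1$, see \cite[p.~251]{macdonald:1995}).

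First I would handle the trivial case $k=0$, which holds by definition of $a_0^\lambda$. For $1\leq k\leq n-1$, I would write, using $\delta(n)_i = n-i$ and $\ell(\delta(n))=n-1$,
\[a_k^{\delta(n)} = -\sum_{i=k}^{n-1}(-1)^i q_{n-i}\,r_{k,i}^{\delta(n)} = -\sum_{i=k}^{n-1}(-1)^i q_{n-i}\,q_{i-k}.\]
Then I would reindex via $j = i-k$ to obtain
\[a_k^{\delta(n)} = (-1)^{k+1}\sum_{j=0}^{n-1-k}(-1)^{j}q_{n-k-j}q_j.\]

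Next I would invoke the classical identity $\sum_{j=0}^{m}(-1)^{j}q_{m-j}q_j=0$ valid for $m\geq1$; applying it with $m=n-k\geq1$ shows that the full sum from $j=0$ to $j=n-k$ vanishes, so the truncated sum equals minus the missing top term:
\[\sum_{j=0}^{n-1-k}(-1)^{j}q_{n-k-j}q_j = -(-1)^{n-k}q_0 q_{n-k} = -(-1)^{n-k}q_{n-k}.\]
Substituting back gives $a_k^{\delta(n)} = (-1)^{k+1}(-1)^{n-k+1}q_{n-k} = (-1)^n q_{n-k}$, as required.

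The only potential obstacle is the endpoint bookkeeping: being careful that the upper limit of summation in the definition of $a_k^\lambda$ is $\ell(\lambda)=n-1$ (not $n$), so the Newton convolution is missing precisely its top term, which is what produces the clean single-term answer. Everything else is a routine index shift followed by a direct appeal to the previously cited power-sum/complete-symmetric-function identity, so no new machinery is needed beyond Proposition \ref{r_ik^del}.
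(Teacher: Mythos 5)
Your proposal is correct and follows essentially the same route as the paper: substitute $r_{k,i}^{\delta(n)}=q_{i-k}$ from Proposition \ref{r_ik^del}, reindex the sum, and recognize the truncated alternating convolution as the full vanishing convolution $\sum_{r+s=n-k}(-1)^rq_rq_s=0$ minus its top term $(-1)^{n-k}q_{n-k}q_0$. The sign bookkeeping and the endpoint observation both match the paper's argument exactly.
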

    
    \begin{proof}
        Using Proposition \ref{r_ik^del}, for $k\geq1$ we have
        \begin{align*}
            a_k^{\delta(n)}&=-\sum_{i=k}^{n-1}(-1)^iq_{\delta_i}r_{k,i}^{\delta(n)}\\
            &=-\sum_{i=k}^{n-1}(-1)^iq_{n-i}q_{i-k}
        \end{align*}
        Next, we write the sum as
        \[(-1)^{k+1}\sum_{j=0}^{n-k-1}(-1)^jq_jq_{n-k-j},\]
        which is equal to
        \[(-1)^{k+1}\left(\left(\sum_{r+s=n-k}(-1)^rq_rq_s\right)-(-1)^{n-k}q_{n-k}q_0\right).\]
        Once again the sum is $0$, and so we get $(-1)^nq_{n-k}$.
    \end{proof}

    \subsection{The Coefficients $r_{i,k}^\lambda$ and the $A_{n-1}$ Root System}

    The $A_{n-1}$ root system has \emph{positive roots} $\Phi^+:=\{\varepsilon_i-\varepsilon_j\,|\,1\leq i<j\leq n\}$, where $\varepsilon_1,\ldots,\varepsilon_n$ are linear functionals of the Cartan subalgebra, and here we view them as formal symbols. For any $i\leq k$, we say that a \emph{decomposition} of $\beta\in\Phi^+$ is a subset $D\subseteq\Phi^+$ of positive roots that sum to $\beta$. Additionally, we say that the only decomposition of $\varepsilon_i-\varepsilon_i=0$ is the empty set $\emptyset$. Otherwise, for $i<k$ it is easy to see that there are $2^{k-i-1}$ decompositions of $\varepsilon_i-\varepsilon_k$ (corresponding to subsets of $\{i,i+1,\ldots,k\}$ that contain both $i$ and $k$). For more details, see the Kostant partition function \cite{kostant:1959,hall:2015}.
    
    We may use these definitions to write the coefficients $r_{i,k}^\lambda$ as a closed sum for any partition $\lambda$.

    \begin{proposition}
        Let $\lambda\in\Z^n$ be a partition, and suppose $1\leq i\leq k\leq n$. Then we have
        \[r_{i,k}^\lambda=(-1)^{i-k}\sum_D (-1)^{\# D}q_D^\lambda,\]
        where the sum ranges over decompositions $D=\{\beta_1,\ldots,\beta_\ell\}\subseteq\Phi^+$ of $\varepsilon_i-\varepsilon_k$, $\# D$ is the cardinality of $D$, and we define
        \begin{align*}
            q_D^\lambda&:=q_{\beta_1}^\lambda\cdots q_{\beta_\ell}^\lambda\quad(\#D\geq1),\\
            q_\emptyset^\lambda&:=1,
        \end{align*}
        where for $\beta_j=\varepsilon_r-\varepsilon_s$ we have
        \[q_{\beta_j}^\lambda:=q_{\lambda_r-\lambda_s}.\]
    \end{proposition}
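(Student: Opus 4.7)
The plan is to prove this by induction on the difference $k-i$, using the recursive definition of $r_{i,k}^\lambda$ given in Lemma \ref{Q_lam/lam_k in terms of skew Schur} together with a bijection on decompositions. The base case $k=i$ is immediate: the only subset of $\Phi^+$ summing to $0=\varepsilon_i-\varepsilon_i$ is $\emptyset$, and the claimed formula evaluates to $(-1)^0(-1)^0 q_\emptyset^\lambda = 1 = r_{i,i}^\lambda$.

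For the inductive step, assume the formula holds for every pair $(i,j)$ with $j-i < k-i$. The recursion from Lemma \ref{Q_lam/lam_k in terms of skew Schur} gives
\[
r_{i,k}^\lambda = -\sum_{j=i}^{k-1} (-1)^{j-k}\, q_{\lambda_j-\lambda_k}\, r_{i,j}^\lambda.
\]
Substituting the inductive expression $r_{i,j}^\lambda = (-1)^{i-j}\sum_{D'} (-1)^{\# D'}\, q_{D'}^\lambda$, where $D'$ ranges over decompositions of $\varepsilon_i - \varepsilon_j$, the alternating signs combine as $(-1)^{j-k}(-1)^{i-j} = (-1)^{i-k}$, and one collects
\[
r_{i,k}^\lambda = (-1)^{i-k}\sum_{j=i}^{k-1}\sum_{D'}(-1)^{\# D'+1}\, q_{\lambda_j-\lambda_k}\, q_{D'}^\lambda.
\]

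The key combinatorial step is to recognize that the map $(j, D') \mapsto D := D' \cup \{\varepsilon_j - \varepsilon_k\}$ is a bijection between pairs with $i \le j \le k-1$ and $D'$ a decomposition of $\varepsilon_i - \varepsilon_j$, and the set of decompositions $D$ of $\varepsilon_i - \varepsilon_k$. Indeed, since $D$ corresponds to a chain $i = s_0 < s_1 < \cdots < s_\ell = k$ of indices (as noted in the excerpt), $D$ contains a unique root of the form $\varepsilon_{s_{\ell-1}} - \varepsilon_k$; deleting it recovers $j = s_{\ell-1}$ and a chain decomposition $D'$ of $\varepsilon_i - \varepsilon_j$, and the added root is disjoint from $D'$ since $D'$ does not involve $\varepsilon_k$. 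Under this bijection $\# D = \# D' + 1$ and $q_D^\lambda = q_{\lambda_j - \lambda_k} \cdot q_{D'}^\lambda$, so the double sum collapses to $\sum_D (-1)^{\# D} q_D^\lambda$, completing the induction.

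The main obstacle is really just careful bookkeeping: one must verify that the bijection is well-defined as a map to \emph{subsets} (not multisets) of $\Phi^+$, which hinges on the observation that $\varepsilon_j - \varepsilon_k$ cannot already appear in any decomposition $D'$ of $\varepsilon_i - \varepsilon_j$ because no root summand of $D'$ involves the index $k$. The sign identity $(-1)^{j-k}(-1)^{i-j} = (-1)^{i-k}$ and the accounting of the extra factor $-1$ with the increment $\# D' \to \# D' + 1$ are routine but deserve explicit mention to make the induction airtight.
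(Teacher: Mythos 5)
Your proof is correct and follows essentially the same route as the paper's: induction on $k-i$ via the recursion for $r_{i,k}^\lambda$, identifying each decomposition of $\varepsilon_i-\varepsilon_k$ with a decomposition of some $\varepsilon_i-\varepsilon_j$ together with the appended root $\varepsilon_j-\varepsilon_k$. You are somewhat more explicit than the paper in verifying that this correspondence is a genuine bijection (the paper only checks that $D'\cup\{\varepsilon_j-\varepsilon_k\}$ sums correctly), and you fold the $j=i$ term into the general sum rather than splitting it off, but these are presentational differences only.
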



    \begin{proof}
       We proceed by induction on the difference $k-i\geq0$. First, when $k=i$, there is nothing to prove. Then, when $k=i+1$, we have from Lemma \ref{alternating sum of skew} that
        \[r_{i,i+1}^\lambda=-(-1)^{i-(i+1)}q_{\lambda_i-\lambda_{i+1}}\cdot 1.\]
        Since $\{\varepsilon_i-\varepsilon_{i+1}\}$ is the only subset of $\Phi^+$ such that the sum of its elements is $\lambda_i-\lambda_{i+1}$, we are done with the base case.

        Next, we see that we have 
        \begin{align*}
            r_{i,k}^\lambda&=-\sum_{j=i}^{k-1}(-1)^{j-k}q_{\lambda_j-\lambda_k}r_{i,j}^\lambda\\
            &=-\sum_{j=i+1}^{k-1}(-1)^{j-k}q_{\lambda_j-\lambda_k}
            \left((-1)^{i-j}\sum_D (-1)^{\# D}q_D^\lambda\right)-(-1)^{i-k}q_{\lambda_i-\lambda_k}
        \end{align*}
        by the induction hypothesis, where the sum is over decompositions of $\varepsilon_i-\varepsilon_j$. After rewriting, we get
        \[(-1)^{i-k}\left(\sum_{j=i+1}^{k-1}-q_{\lambda_j-\lambda_k}\sum_D (-1)^{\# D}q_D^\lambda-q_{\lambda_i-\lambda_k}\right).\]
        Notice that $-q_{\lambda_i-\lambda_k}=(-1)^{\#\{\varepsilon_i-\varepsilon_k\}}q_{\{\varepsilon_i-\varepsilon_k\}}^\lambda$. Also, for any decomposition $D$ of $\varepsilon_i-\varepsilon_j$, we have that the elements of $D\cup\{\lambda_j-\lambda_k\}$ sum to $\varepsilon_i-\varepsilon_k$. Therefore, we can combine everything into the desired sum, ranging over decompositions of $\varepsilon_i-\varepsilon_k$.
    \end{proof}
    
    \section{Further Identities}

    \subsection{A Skew Function Identity}

    The Schur functions $S_\lambda$ form a basis of the ring $\Lambda$ of symmetric functions. One identity that Schur functions satisfy is $S_{p\lambda/(p-k)}=S_{k\lambda}$ (see \cite[p.~396]{carre-thibon:1992}). We may prove a similar identity with Schur's $Q$-functions.
    
    \begin{proposition}\label{p>t+r}
        For all partitions $\lambda\in\Z^n$ and integers $k,p\in\Z$ such that $k\geq0$ and $p>\lambda_1+k$, we have
        \[Q_{p\lambda/(p-k)}=q_kQ_\lambda.\]
    \end{proposition}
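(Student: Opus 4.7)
The plan is to compute $Q_{p\lambda/(p-k)}$ directly from its Pfaffian definition, exploiting the vanishing forced by the hypothesis $p>\lambda_1+k$. By Proposition \ref{Q_lambda0}, I may assume without loss of generality that $n$ is even, since appending a trailing $0$ to $\lambda$ changes neither side of the identity. Then $p\lambda$ has odd length $n+1$, the total length $n+2$ is even, and by definition
\[
Q_{p\lambda/(p-k)} = \Pf M(p\lambda, (p-k)),
\]
a skew-symmetric $(n+2)\times(n+2)$ matrix whose last column (coming from $N(p\lambda,(p-k))$) has entries $q_{(p\lambda)_i-(p-k)}$ for $i=1,\ldots,n+1$.

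The key observation is that the hypothesis $p>\lambda_1+k$ forces every entry of this last column to vanish except the first. Indeed, the top entry is $q_{p-(p-k)}=q_k$, whereas for $i\geq 2$ the entry is $q_{\lambda_{i-1}-(p-k)}$, and since $p-k>\lambda_1\geq\lambda_{i-1}$ the subscript is strictly negative, so the entry is $0$. Hence the last row and column of $M(p\lambda,(p-k))$ contain only one nonzero entry, located at position $(1,n+2)$ and equal to $q_k$.

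Finally, I apply the Laplace expansion of the Pfaffian along this last row/column (i.e., $i=n+2$ in the formula from the preliminaries). Only the $j=1$ term survives, and the submatrix $M_{n+2,1}$ obtained by deleting the first and last rows and columns is precisely $M(\lambda)$, whose Pfaffian is $Q_\lambda$. Combining the signs $(-1)^{n+2-1}\cdot(-1)^1=(-1)^{n+2}$ with the entry $q_k$ and the factor $Q_\lambda$, and using that $n$ is even, yields the desired identity $Q_{p\lambda/(p-k)}=q_k Q_\lambda$. The only real point of care is the sign-tracking in this single Laplace expansion, which is routine given the expansion formula stated in the preliminaries; no auxiliary identities beyond Proposition \ref{Q_lambda0} are needed.
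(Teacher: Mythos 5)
Your proposal is correct and follows essentially the same route as the paper: reduce to $n$ even, observe that the hypothesis $p-k>\lambda_1$ kills every entry of the appended skew column except the top one $q_{p-(p-k)}=q_k$, and then perform a single Laplace expansion of $\Pf M(p\lambda,(p-k))$ along that row/column, with the surviving term carrying sign $(-1)^{n+2-1}(-1)^1=1$ and submatrix $M(\lambda)$. The sign bookkeeping and the reduction via Proposition \ref{Q_lambda0} match the paper's argument exactly.
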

    
    \begin{proof}
        First, note that we have $q_{\lambda_i-(p-k)}=0$ for all $i$ since we see that $\lambda_i-(p-k)\leq\lambda_1-(p-k)<0$. Next, we may assume that $n$ is even, so we have
        \begin{align*}
            Q_{p\lambda/(p-k)}&=\Pf\begin{pNiceArray}{cccc|c}
                \Block{4-4}<\fontsize{30}{50}>{M(p\lambda)} &&&& q_{p-(p-k)}\\
                &&&& q_{\lambda_1-(p-k)}\\
                &&&& \vdots\\
                &&&& q_{\lambda_n-(p-k)}\\\hline
                -q_{p-(p-k)} & -q_{\lambda_1-(p-k)} & \cdots & -q_{\lambda_n-(p-k)} & 0
            \end{pNiceArray}\\
            &=\Pf\begin{pNiceArray}{cccc|c}
                \Block{4-4}<\fontsize{25}{50}>{M(p\lambda)} &&&& q_k\\
                &&&& 0\\
                &&&& \vdots\\
                &&&& 0\\\hline
                -q_k & 0 & \cdots & 0 & 0
            \end{pNiceArray}\\
            &=(-1)^{n+2-1}(-1)^1q_k\Pf M(\lambda)
        \end{align*}
        by expanding along the last row/column.
    \end{proof}

    \subsection{An Alternating Identity}

    As we have seen, Schur's $Q$-functions satisfy several identities involving alternating sums. For example, for any fixed integer $n\in\Z$ we have the fundamental identity
    \[\sum_{r+s=n}(-1)^rq_rq_s=
        \begin{cases}
            0&n\neq0,\\
            1&n=0,
        \end{cases}\]
    (see \cite[p.~251]{macdonald:1995}). We now provide a similar identity involving Schur's $Q$-functions with two parts.

    
    \begin{proposition}
        Fix integers $p,n\in\Z$, then we have
        \[\sum_{r+s=n}(-1)^rq_rQ_{(p,s)}=
            \begin{cases}
                (-1)^n2q_{p+n}&n>0,\\
                q_p&n=0,\\
                0&n<0,
            \end{cases}\]
        and
        \[\sum_{r+s=n}(-1)^rq_rQ_{(s,p)}=
            \begin{cases}
                0&n>0,\\
                q_p&n=0,\\
                (-1)^n2q_{p+n}&n<0.
            \end{cases}\]
    \end{proposition}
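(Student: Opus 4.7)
The plan is to substitute the explicit formula (\ref{Q_(r,s)}) for $Q_{(p,s)}$ directly into the sum and then reduce everything to the fundamental quadratic identity $\sum_{r+s=n}(-1)^r q_r q_s = \delta_{n,0}$ recalled just above the statement. Expanding $Q_{(p,s)} = q_p q_s + 2\sum_{i=1}^{s}(-1)^i q_{p+i} q_{s-i}$ splits the first sum as
\[q_p\sum_{r+s=n}(-1)^r q_r q_s \;+\; 2\sum_{r+s=n}(-1)^r q_r\sum_{i\geq 1}(-1)^i q_{p+i} q_{s-i},\]
where the upper bound $i\leq s$ on the inner sum can be extended to infinity since $q_{s-i}=0$ whenever $i>s$. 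The first piece equals $q_p$ when $n=0$ and vanishes otherwise. For the second piece I would swap the order of summation, fixing $i\geq 1$ and reindexing the inner sum by $t=s-i$, so that it becomes $\sum_{r+t=n-i}(-1)^r q_r q_t = \delta_{n-i,0}$; only $i=n$ contributes, producing $2(-1)^n q_{p+n}$ when $n\geq 1$ and nothing otherwise. Combining the two pieces yields the three stated cases.

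The second identity is handled in exactly parallel fashion, now using $Q_{(s,p)} = q_s q_p + 2\sum_{i=1}^{p}(-1)^i q_{s+i} q_{p-i}$. The first piece again contributes $q_p\,\delta_{n,0}$. For the second piece, swapping sums and reindexing via $t=s+i$ turns the inner sum into $\sum_{r+t=n+i}(-1)^r q_r q_t = \delta_{n+i,0}$, which selects $i=-n$. Since $i$ now ranges over $\{1,\ldots,p\}$, this yields $2(-1)^n q_{p+n}$ exactly when $1\leq -n\leq p$; and when $-n>p$ the factor $q_{p+n}$ itself vanishes, so $2(-1)^n q_{p+n}$ remains correct throughout $n<0$.

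The asymmetry between the two formulas is built in at the moment the inner sum's upper bound is interpreted: in the first identity the bound $i\leq s$ is absorbed by the convention $q_{s-i}=0$ and forces $i\leq n$, so only positive $n$ survive; in the second identity the bound $i\leq p$ is a genuine restriction on $i$, and it is compatible with the constraint $i=-n$ only when $n<0$. Beyond this structural point there is no real obstacle, and the proof is essentially a bookkeeping exercise around the convention $q_k=0$ for $k<0$ and the correct tracking of empty summation ranges when the order of summation is swapped.
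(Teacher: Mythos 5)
Your proof is correct and takes essentially the same route as the paper's: expand $Q_{(p,s)}$ (resp.\ $Q_{(s,p)}$) via (\ref{Q_(r,s)}), split off the $q_pq_s$ term, and reduce the rest to the quadratic identity $\sum_{r+s=m}(-1)^rq_rq_s=\delta_{m,0}$ after swapping and reindexing the double sum. The only cosmetic differences are that you handle all signs of $n$ uniformly with Kronecker deltas instead of disposing of $n\leq 0$ by inspection first, and that you prove the second identity by the parallel direct computation rather than by appealing to Proposition \ref{B_iQ_lambda} --- both options the paper itself notes.
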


    \begin{proof}
        First, consider the sum $\sum_{r+s=n}(-1)^rq_rQ_{(p,s)}$. If $n<0$ then we have either $r<0$ or $s<0$ in each term, and so the sum is $0$. Similarly, if $n=0$, then the only possible nonzero term is when $r=s=0$, and so we get $(-1)^0q_0Q_{(p,0)}=q_p$.

        So, suppose $n>0$, then we use (\ref{Q_(r,s)}) to expand $Q_{(p,s)}$ and get
        \begin{align*}
             \sum_{r+s=n}(-1)^rq_rQ_{(p,s)}&=\sum_{r+s=n}(-1)^rq_r\left(q_pq_s+2\sum_{i=1}^s(-1)^iq_{p+i}q_{s-i}\right)\\
            &=q_p\sum_{r+s=n}(-1)^rq_rq_s+2\sum_{r+s=n}(-1)^rq_r\sum_{i=1}^s(-1)^iq_{p+i}q_{s-i}.
        \end{align*}
        Note that first sum is $0$ since $n>0$. We may reindex the next sums so that we have
        \[2\sum_{j=0}^{n-1}\sum_{i=1}^{n-j}(-1)^jq_j(-1)^iq_{p+i}q_{n-j-i},\]
        where the outer sum only runs to $j=n-1$ since $q_{n-j-i}=0$ when $j=n$. By swapping the order of the sums, we have
        \[2\sum_{i=1}^n\sum_{j=0}^{n-i}(-1)^jq_j(-1)^iq_{p+i}q_{n-j-i}.\]
        Finally, we can rewrite this to get
        \begin{align*}
            2\sum_{i=1}^{n}&(-1)^iq_{p+i}\sum_{j=0}^{n-i}(-1)^jq_jq_{n-j-i}\\
            &=2\sum_{i=1}^{n}(-1)^iq_{p+i}\sum_{u+v=n-i}(-1)^uq_uq_v\\
            &=2(-1)^nq_{p+n}
        \end{align*}
        since the inner sum is $0$ if $n-i\neq0$, and is $1$ if $i=n$.
        
        Finally, we get the second identity with similar calculations, or by applying Proposition \ref{B_iQ_lambda} to $Q_{(p,s)}$ in the first identity.
    \end{proof}

        

    
    We note that one may prove these identities with generating functions. However, our construction allows for a proof with just basic algebra. Importantly, we can see that it is useful that equation (\ref{Q_(r,s)}) is used as the definition of $Q_{(r,s)}$ for all integers $r$ and $s$, rather than just the case where both are positive.

    \bibliographystyle{alpha}
    \bibliography{references}

\begin{thebibliography}{HCL23}

\bibitem[CJL]{cao-jing-liu:2024}
Yue Cao, Naihuan Jing, and Ning Liu.
\newblock A spin analog of the plethystic {M}urnaghan-{N}akayama rule.
\newblock Preprint.

\bibitem[CT92]{carre-thibon:1992}
Christophe Carre and Jean-Yves Thibon.
\newblock Plethysm and vertex operators.
\newblock {\em Adv. Appl. Math.}, 13:390--403, 1992.

\bibitem[Hal15]{hall:2015}
Brian~C. Hall.
\newblock {\em Lie Groups, Lie Algebras, and Representations: An Elementary Introduction}.
\newblock Springer, 2nd edition, 2015.

\bibitem[Ham95]{hamel:1996}
A.~M. Hamel.
\newblock Pfaffians and determinants for {S}chur {$Q$}-functions.
\newblock {\em J Combin. theory, Ser. A}, 75:328--340, 1995.

\bibitem[HCL23]{huang-chu-li:2023}
Fang Huang, Yanjun Chu, and Chuanzhong Li.
\newblock Littlewood‑{R}ichardson rule for generalized {S}chur {Q}‑functions.
\newblock {\em Alg. and Representation Theory}, 26:3143--3165, 2023.

\bibitem[Jin91a]{jing:1991a}
Naihuan Jing.
\newblock {\bibDate{1991a}}{V}ertex operators, symmetric functions, and the spin group {$\Gamma_n$}.
\newblock {\em J. Algebra}, 138:340--398, 1991.

\bibitem[Jin91b]{jing:1991b}
Naihuan Jing.
\newblock {\bibDate{1991b}}{V}ertex operators and {H}all-{L}ittlewood symmetric functions.
\newblock {\em Adv. Math.}, 87:226--248, 1991.

\bibitem[JP91]{jozefiak-pragacz:1991}
Tadeusz J{\'o}zefiak and Piotr Pragacz.
\newblock A determinantal formula for skew {$Q$}-functions.
\newblock {\em J. London Math. Soc.}, 43:76--90, 1991.

\bibitem[Kos59]{kostant:1959}
Bertram Kostant.
\newblock A formula for the multiplicity of the weight.
\newblock {\em Trans. Amer. Math. Soc.}, 93:53--73, 1959.

\bibitem[Mac95]{macdonald:1995}
{I. G.} Macdonald.
\newblock {\em Symmetric Functions and Hall Polynomials}.
\newblock Oxford University, Oxford, 2nd edition, 1995.

\bibitem[Nim90]{nimmo:1990}
J~J~C Nimmo.
\newblock Hall-{L}ittlewood symmetric functions and the {BKP} equation.
\newblock {\em J. Physics A: Math. and General}, 23(5):751--760, 1990.

\bibitem[Oga09]{ogawa:2009}
Yuji Ogawa.
\newblock Generalized {$Q$}-functions and {BC}-hierarchy of {B}-type.
\newblock {\em Tokyo J. Math.}, 32:349--380, 2009.

\bibitem[Oka19]{okada:2019}
Soichi Okada.
\newblock Pfaffian formulas and {S}chur {$Q$}-function identities.
\newblock {\em Adv. Math.}, 353:446--470, 2019.

\bibitem[Sch11]{schur:1911}
I.~Schur.
\newblock Über die darstellung der symmetrischen und der alternierenden gruppe durch gebrochene lineare substitutionen.
\newblock {\em Journal für die reine und angewandte Mathematik}, 139:155--250, 1911.

\bibitem[Ste89]{stembridge:1989}
John~R. Stembridge.
\newblock Shifted tableaux and the projective representations of symmetric groups.
\newblock {\em Adv. Math.}, 74:87--134, 1989.

\end{thebibliography}

\end{document}